\def\d{\delta}
\def\B{\mathcal B}
\def\A{\mathcal A}
\def\a{\alpha}
\def\d{\delta}
\def\la{\lambda}
\newtheorem{theorem}{Theorem}[section]
\newtheorem{lemma}[theorem]{Lemma}
\newtheorem{Assumption}{Assumption}
\newtheorem{proposition}{Proposition}[section]
\theoremstyle{definition}
\theoremstyle{remark}
\numberwithin{equation}{section}
\begin{document}

\title[]{A general convergence analysis on inexact Newton method for
nonlinear inverse problems}

\author{Qinian Jin}
\address{Department of Mathematics, Virginia Tech, Blacksburg, VA
24061} \email{qnjin@math.vt.edu}


\date{August 1, 2010}



\begin{abstract}
We consider the inexact Newton methods
$$
x_{n+1}^\d=x_n^\d-g_{\a_n}\left(F'(x_n^\d)^* F'(x_n^\d)\right) F'(x_n^\d)^* \left(F(x_n^\d)-y^\d\right)
$$
for solving nonlinear ill-posed inverse problems $F(x)=y$ using the only available noise data
$y^\d$ satisfying $\|y^\d-y\|\le \d$ with a given small noise level $\d>0$. We terminate the iteration
by the discrepancy principle
$$
\|F(x_{n_\d}^\d)-y^\d\|\le \tau \d<\|F(x_n^\d)-y^\d\|, \qquad 0\le n<n_\d
$$
with a given number $\tau>1$. Under certain conditions on $\{\a_n\}$ and $F$, we prove for a large class
of spectral filter functions $\{g_\a\}$ the convergence of $x_{n_\d}^\d$ to a true solution as
$\d\rightarrow 0$. Moreover, we derive the order optimal rates of convergence
when certain H\"{o}lder source conditions hold. Numerical examples are given to test the
theoretical results.
\end{abstract}

\maketitle

\def\A{\mathcal A}
\def\B{\mathcal B}
\def\a{\alpha}
\def\d{\delta}
\def\la{\lambda}

\section{\bf Introduction}
\setcounter{equation}{0}

In this paper we consider the nonlinear equations
\begin{equation}\label{1}
F(x)=y,
\end{equation}
arising from nonlinear inverse problems, where $F: D(F)\subset
X\mapsto Y$ is a nonlinear Fr\'{e}chet differentiable operator
between two Hilbert spaces $X$ and $Y$ whose norms and inner
products are denoted as $\|\cdot\|$ and $(\cdot, \cdot)$
respectively. We assume that (\ref{1}) has a solution $x^\dag$
in the domain $D(F)$ of $F$, i.e. $F(x^\dag)=y$. We use
$F'(x)$ to denote the Fr\'{e}chet derivative of $F$ at $x\in D(F)$
and $F'(x)^*$ the adjoint of $F'(x)$.
A characteristic property of such problems is
their ill-posedness in the sense that their solutions do not
depend continuously on the data. Since the right hand side $y$ is usually
obtained by measurement, the only available data is a noise
$y^\d$ satisfying
\begin{equation}\label{1.2}
\|y^\delta-y\|\le \delta
\end{equation}
with a given small noise level $\delta> 0$. Due to the
ill-posedness, it is challenging to produce
from $y^\d$ a stable approximate solution to $x^\dag$ and the regularization
techniques must be taken into account.

Many regularization methods have been considered for solving (\ref{1}) in the last two
decades. Tikhonov regularization is one of the well-known methods that have been
studied extensively in the literature.  Due to the straightforward implementation,
iterative methods are also attractive for solving nonlinear inverse problems.
In this paper we will consider a class of inexact Newton methods. To motivate,
let $x_n^\d$ be a current iterate. We may approximate $F(x)$ by its linearization around $x_n^\d$,
i.e. $F(x)\approx F(x_n^\d) +F'(x_n^\d) (x-x_n^\d)$. Thus, instead of (\ref{1}) we have
the approximate equation
\begin{equation}\label{NT}
F'(x_n^\d) (x-x_n^\d) =y^\d-F(x_n^\d).
\end{equation}
If $F'(x_n^\d)$ is invertible, the usual Newton method defines the next iterate by
solving (\ref{NT}) for $x$. Computing the exact solution of (\ref{NT}) however can be expensive
in general even the problem is well-posed. Thus, one might prefer
to compute some approximate solution at certain accuracy and use it as the next iterate.
This motivates the inexact Newton methods in \cite{DES82} where for well-posed problems
the convergence was carried out when the next computed iterate $x_{n+1}^\d$ satisfies
\begin{equation}\label{m2}
\|F(x_n^\d)-y^\d+F'(x_n^\d)(x_{n+1}^\d-x_n^\d)\|\le \mu_n \|F(x_n^\d)-y^\d\|
\end{equation}
at each step with the forcing terms $\mu_n\in (0,1)$ being uniformly bounded below $1$.
For nonlinear ill-posed inverse problems, $F'(x_n^\d)$ in general is not invertible and
(\ref{NT}) usually is ill-posed. Therefore one should use the regularization methods
to solve (\ref{NT}) approximately. Let $\{g_\a\}$ be a family of spectral filter functions.
We can apply the linear regularization method defined by $\{g_\a\}$
to (\ref{NT}) to produce the next iterate.
This leads to the following inexact Newton method
\begin{equation}\label{2}
x_{n+1}^\d =x_n^\d-g_{\alpha_n} \left(F'(x_n^\d)^*F'(x_n^\d)\right) F'(x_n^\d)^*
\left(F(x_n^\d)-y^\d\right),
\end{equation}
where $x_0^\d:=x_0\in D(F)$ is an initial guess of $x^\dag$ and  $\{\a_n\}$ is a sequence of positive numbers.
By taking $g_\a$ to be various functions, (\ref{2}) then produces the
nonlinear Landweber iteration \cite{HNS96}, the Levenberg-Marquardt method \cite{H97,Jin10a},
the exponential Euler iteration \cite{HHO09}, and the
first-stage Runge-Kutta type regularization \cite{PB10}.

In this paper we will consider the inexact Newton method (\ref{2}) in a unified way
by assuming that $\{\a_n\}$ is an a priori given sequence of positive numbers
with suitable properties. We will terminate the iteration by the discrepancy principle
\begin{equation}\label{DP}
\|F(x_{n_\d}^\d)-y^\d\|\le \tau \d <\|F(x_n^\d)-y^\d\|, \quad 0\le n<n_\d
\end{equation}
with a given number $\tau>1$ and consider the approximation property
of $x_{n_\d}^\d$ to $x^\dag$ as $\d\rightarrow 0$. For a large class of
spectral filter functions $\{g_\a\}$ we will establish the convergence of $x_{n_\d}^\d$
to $x^\dag$ as $\d\rightarrow 0$ and derive  the order optimal convergence rates
for the method defined by (\ref{2}) and (\ref{DP}). Our work not only reproduces those
known results in \cite{HNS96,Jin10a,HHO09,PB10} but also presents new convergence results and new methods.
Furthermore, our convergence analysis provides new insights into the feature of the inexact
Newton regularization methods.

In the definition of the inexact Newton method, one may determine the sequence $\{\a_n\}$ adaptively
during computation. In \cite{H97} the Levenberg-Marquardt scheme was considered
with $\{\a_n\}$ chosen adaptively so that (\ref{m2}) holds and
the discrepancy principle was used to terminate the iteration. The order optimal
convergence rates were derived recently in \cite{H2010}.
The general methods (\ref{2}) with $\{\a_n\}$ chosen adaptively to satisfy
(\ref{m2}) were considered later in \cite{R99,LR2010}, but only suboptimal
convergence rates were derived in \cite{R01} and the convergence analysis
is far from complete. The methods of the present paper is essentially different in that
the sequence $\{\a_n\}$ is given in an a priori way which has the advantage of saving computational work.
We hope, however, the work of the present paper can provide better understanding on the methods with
$\{\a_n\}$ chosen adaptively.

This paper is organized as follows. In Section 2 we first formulate the conditions on
$\{\a_n\}$, $\{g_\a\}$ and $F$, and state the main results on the convergence and
rates of convergence for the methods defined by (\ref{2}) and (\ref{DP}), we then
give several examples of iteration methods that fit into the framework (\ref{2}). In Section 3
we prove some crucial inequalities which is frequently used in the convergence analysis.
In Section 4 we derive the order optimal convergence rate result when $x_0-x^\dag$ satisfies
certain source conditions. In Section 5 we show the convergence property
without assuming any source conditions on $x_0-x^\dag$. Finally in Section 5 we present numerical examples
to test the theoretical results.

\section{\bf Main results}
\setcounter{equation}{0}

In order to carry out the convergence analysis on the method
defined by (\ref{2}) and (\ref{DP}), we need to impose
suitable conditions on $\{\a_n\}$, $\{g_\a\}$ and $F$.
For the sequence $\{\a_n\}$ of positive numbers, we set
\begin{equation}\label{a1}
s_{-1}=0, \qquad s_n:=\sum_{j=0}^n \frac{1}{\a_j}, \qquad n=0, 1,\cdots.
\end{equation}
We will assume that there are constants $c_0>1$ and $c_1>0$ such that
\begin{equation}\label{60}
\lim_{n \rightarrow \infty} s_n =\infty, \quad
s_{n+1}\le c_0 s_n \quad \mbox{and} \quad 0<\a_n \le c_1, \quad n=0, 1,\cdots.
\end{equation}
For the spectral filter functions $\{g_\a\}$, we will assume the following two conditions,
where ${\mathbb C}$ denotes the complex plane.

\begin{Assumption}\label{A4}
For each $\a>0$, the function
$$
\varphi_\a(\lambda):=g_\a(\lambda)-\frac{1}{\a+\lambda}
$$
extends to a complex analytic
function defined on a domain $D_\a\subset {\mathbb C}$ such that
$[0, 1]\subset D_\a$, and there is a contour $\Gamma_\a\subset
D_\a$ enclosing $[0, 1]$ such that
\begin{equation}\label{2.6}
|z|\ge \frac{1}{2}\a \quad \mbox{and} \quad \frac{|z|+\la}{|z-\la|}\le b_0, \qquad \forall z \in
\Gamma_\a, \, \a>0 \mbox{ and } \la \in [0,1],
\end{equation}
where $b_0$ is a constant independent of $\a>0$.
Moreover, there is a constant $b_1$ such that
\begin{equation}\label{2.8}
\int_{\Gamma_\a} \left|\varphi_\a(z)\right| |d z|\le b_1
\end{equation}
for all $0<\a\le c_1$.
\end{Assumption}

\begin{Assumption}\label{A5}
Let $\{\a_n\}$ be a sequence of positive numbers,
let $\{s_n\}$ be defined by (\ref{a1}). There is a constant $b_2>0$ such that
\begin{align}
0\le \la^\nu \prod_{k=j}^n r_{\a_k}(\la)&\le (s_n-s_{j-1})^{-\nu},  \label{g1}\\
0\le \la^\nu g_{\a_j}(\la) \prod_{k=j+1}^n r_{\a_k}(\la)&\le b_2
\frac{1}{\a_j} (s_n-s_{j-1})^{-\nu} \label{g2}
\end{align}
for $0\le \nu\le 1$,  $0\le \la \le 1$ and $j=0, 1, \cdots, n$, where
$r_\a(\lambda):=1-\la g_\a(\la)$ is the residual function.
\end{Assumption}

By using the spectral integrals for self-adjoint operators, it follows easily from
(\ref{2.6}) in Assumption \ref{A4} that for any bounded
linear operator $A$ with $\|A\|\le 1$ there holds
\begin{equation}\label{200}
\|(z I-A^*A)^{-1}(A^*A)^\nu\|\le \frac{b_0}{|z|^{1-\nu}}
\end{equation}
for $z \in \Gamma_\a$ and $0\le \nu\le 1$.
Moreover, since Assumption \ref{A4} implies $\varphi_\a(z)$ is analytic
in $D_\a$ for each $\a>0$, there holds the Riesz-Dunford formula (see \cite{BK04})
\begin{equation}\label{RD}
\varphi_\a(A^*A)=\frac{1}{2\pi i} \int_{\Gamma_\a} \varphi_\a(z) (z
I-A^*A)^{-1} d z
\end{equation}
for any linear operator $A$ satisfying $\|A\|\le 1$.

As a simple consequence of (\ref{g1}) in Assumption \ref{A5}, we have
for $0\le \nu\le 1$ and $\a>0$ that
\begin{equation}
0\le \la^{\nu} (\a+\la)^{-1} \prod_{k=j+1}^n r_{\a_k}(\la)
\le 2 \a^{\nu-1} \left(1+\a (s_n-s_j)\right)^{-\nu} \label{g3}
\end{equation}
for all $0\le \la\le 1$ and $j=0, 1, \cdots, n$, see \cite[Lemma 1]{JT2010}.

For the nonlinear operator $F$, we need the following condition
which has been verified in \cite{HNS96} for several nonlinear inverse problems.

\begin{Assumption}\label{A1}
(a) There exists $K_0\ge 0$ such that
\begin{equation}\label{A10}
F'(x)=R(x, \bar{x}) F'(\bar{x}) \quad \mbox{and} \quad \|I-R(x, \bar{x})\|\le K_0\|x-\bar{x}\|
\end{equation}
for all $x, \bar{x} \in B_\rho(x^\dag)\subset D(F)$.

(b) $F$ is properly scaled so that $\|F'(x) \|
\le \min\{1,\sqrt{\a_0}\}$ for all $x\in B_\rho(x^\dag)$.
\end{Assumption}

The condition (a) in Assumption \ref{A1} clearly implies that $\|F'(x)\|$ is uniformly bounded
over $B_\rho(x^\dag)$. Thus, by multiplying (\ref{1}) by a sufficiently small number, we may assume
that $F$ is properly scaled so that condition (b) in Assumption \ref{A1} is satisfied. A direct consequence
of Assumption \ref{A1} is the inequality
$$
\|F(x)-F(x^\dag)-F'(x^\dag)(x-x^\dag)\|\le \frac{1}{2} K_0\|x-x^\dag\| \|F'(x^\dag) (x-x^\dag)\|
$$
for all $x\in B_\rho(x^\dag)$, which will be frequently used in the convergence analysis.

Now we are ready to state the first main result concerning the rate of convergence of $x_{n_\d}^\d$ to
$x^\dag$ as $\d\rightarrow 0$ when
$e_0:=x_0-x^\dag$ satisfies the sourcewise condition
\begin{equation}\label{33}
x_0-x^\dag =(F'(x^\dag)^*F'(x^\dag))^\nu \omega
\end{equation}
for some $0<\nu\le 1/2$ and $\omega\in {\mathcal N}(F'(x^\dag))^\perp\subset X$,
where $n_\d$ is the integer determined by the discrepancy
principle (\ref{DP}) with $\tau>1$.

\begin{theorem}\label{T1}
Let $F$ satisfy Assumptions \ref{A1}, let $\{g_\a\}$
satisfy Assumptions \ref{A4} and \ref{A5},  and let $\{\a_n\}$ be
a sequence of positive numbers satisfying (\ref{60}).
If $x_0-x^\dag$ satisfies the source condition (\ref{33}) for some
$0<\nu\le 1/2$ and $\omega\in {\mathcal N}(F'(x^\dag))^\perp\subset X$
and if $K_0\|\omega\|$ is suitably small, then
$$
\|x_{n_\d}^\d-x^\dag\| \le C_\nu \|\omega\|^{1/(1+2\nu)} \d^{2\nu/(1+2\nu)}
$$
for the integer $n_\d$ determined by the discrepancy principle
(\ref{DP}) with $\tau>1$, where $C_\nu>0$ is a generic constant independent of $\d$
and $\|\omega\|$.
\end{theorem}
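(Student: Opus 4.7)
The plan is to analyze the error $e_n^\d := x_n^\d - x^\dag$ and the residual $\|F(x_n^\d) - y^\d\|$ simultaneously by induction on $n$, and then to exploit the discrepancy principle to pin down the correct value of $s_{n_\d-1}$. Writing $T_n := F'(x_n^\d)^* F'(x_n^\d)$ and using the decomposition
$$F(x_n^\d) - y^\d = F'(x_n^\d) e_n^\d + \bigl[F(x_n^\d) - F(x^\dag) - F'(x_n^\d) e_n^\d\bigr] + (y - y^\d),$$
the definition (\ref{2}) yields the basic recursion
$$e_{n+1}^\d = r_{\a_n}(T_n)\, e_n^\d + g_{\a_n}(T_n) F'(x_n^\d)^* \bigl[y^\d - y + F(x^\dag) - F(x_n^\d) + F'(x_n^\d) e_n^\d\bigr].$$
The consequence of Assumption \ref{A1} recorded just after its statement controls the nonlinear remainder by $\tfrac12 K_0 \|e_n^\d\| \|F'(x^\dag) e_n^\d\|$, and this is the channel through which the smallness hypothesis on $K_0 \|\omega\|$ will eventually absorb the quadratic error.

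The main technical obstacle is that the operators $T_n$ change with $n$, so the product $\prod_{k=0}^{n-1} r_{\a_k}(T_k)$ is not the same as $\prod_{k=0}^{n-1} r_{\a_k}(T_0)$ with $T_0 := F'(x^\dag)^* F'(x^\dag)$, and only the latter interacts well with the source condition. This is where the splitting $g_\a(\la) = (\a+\la)^{-1} + \varphi_\a(\la)$ from Assumption \ref{A4} is essential. The Levenberg--Marquardt-like part $(\a+\la)^{-1}$ admits an algebraic shift identity that interacts cleanly with the factorization $F'(x) = R(x, x^\dag) F'(x^\dag)$ from Assumption \ref{A1}(a). The analytic part $\varphi_{\a_n}(T_n)$ is handled via the Riesz--Dunford representation (\ref{RD}): on the contour $\Gamma_{\a_n}$ the resolvent estimate (\ref{200}) together with (\ref{2.6})--(\ref{2.8}) permits one to trade $T_n$ for $T_0$ at the price of a term proportional to $K_0 \|x_n^\d - x^\dag\|$, which is absorbable under the smallness assumption. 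This perturbation reduction is the heart of the argument.

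Once the problem is reduced to products involving the fixed operator $T_0$, the source condition $x_0 - x^\dag = T_0^\nu \omega$ combined with (\ref{g1}) supplies the principal bounds
$$\Bigl\|\prod_{k=0}^{n-1} r_{\a_k}(T_0)\, e_0\Bigr\| \le s_{n-1}^{-\nu} \|\omega\|, \qquad \Bigl\|F'(x^\dag) \prod_{k=0}^{n-1} r_{\a_k}(T_0)\, e_0\Bigr\| \le s_{n-1}^{-\nu-1/2} \|\omega\|,$$
while the cumulative noise contribution coming from the $(y - y^\d)$ terms is estimated by (\ref{g2}) (and, where a Levenberg--Marquardt factor appears, by (\ref{g3})), giving contributions of order $\sqrt{s_{n-1}}\, \d$ in the iterate and of order $\d$ in the residual. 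I would then prove by strong induction on $n$ that for all $n \le n_\d$,
$$\|e_n^\d\| \le C_1 s_{n-1}^{-\nu} \|\omega\| + C_2 \sqrt{s_{n-1}}\, \d, \qquad \|F(x_n^\d) - y^\d\| \le C_3 s_{n-1}^{-\nu-1/2} \|\omega\| + C_4\, \d,$$
with $x_n^\d \in B_\rho(x^\dag)$ throughout; at each inductive step the smallness of $K_0 \|\omega\|$ is used to absorb the quadratic term arising from the nonlinear remainder and the $T_n$-to-$T_0$ perturbation.

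To conclude, the stopping rule (\ref{DP}) together with its failure at step $n_\d - 1$ and the bound $s_{n} \le c_0 s_{n-1}$ from (\ref{60}) forces $s_{n_\d-1}$ to be comparable to $(\|\omega\|/\d)^{2/(1+2\nu)}$, with constants depending only on $\tau$, $c_0$, and the $C_i$'s. Substituting this value into the iterate bound causes the two contributions $s_{n_\d-1}^{-\nu}\|\omega\|$ and $\sqrt{s_{n_\d-1}}\, \d$ to balance, each equalling a constant multiple of $\|\omega\|^{1/(1+2\nu)} \d^{2\nu/(1+2\nu)}$, which is exactly the rate asserted in Theorem \ref{T1}. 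The delicate step will be the bookkeeping in the second paragraph: organizing the errors from replacing $T_n$ by $T_0$ at every iteration so that their sum stays dominated, uniformly in $n$, by the source-condition bound, and ensuring that the smallness of $K_0 \|\omega\|$ suffices to keep the iterates in $B_\rho(x^\dag)$ and to close the induction.
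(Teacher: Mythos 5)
Your inductive machinery (the splitting $g_\a=(\a+\la)^{-1}+\varphi_\a$, the Riesz--Dunford contour argument to trade $F'(x_n^\d)^*F'(x_n^\d)$ for $T^*T$ at a cost $\propto K_0\|x_n^\d-x^\dag\|$, and the a priori bounds on $\|e_n^\d\|$ and the residual up to the stopping index) is exactly the paper's route (Lemma \ref{L10} and Proposition \ref{P1}); the only cosmetic difference is that the paper absorbs the noise term into $\|\omega\|s_n^{-\nu}$ using $\d\lesssim \|\omega\|s_n^{-\nu-1/2}$ for $n<\tilde n_\d$ rather than carrying a separate $\sqrt{s_{n-1}}\,\d$ term, which also sidesteps the mild circularity in your phrase ``for all $n\le n_\d$'' (the paper defines the auxiliary index $\tilde n_\d$ a priori, runs the induction up to $\tilde n_\d$, and only then proves $n_\d\le\tilde n_\d$).

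The genuine gap is in your concluding step. The discrepancy principle gives you only \emph{one} of the two inequalities needed for ``$s_{n_\d-1}$ comparable to $(\|\omega\|/\d)^{2/(1+2\nu)}$'': from $\|F(x_{n}^\d)-y^\d\|>\tau\d$ for $n<n_\d$ together with the upper bound on the residual you get $s_{n_\d-1}\lesssim(\|\omega\|/\d)^{2/(1+2\nu)}$, which controls $\sqrt{s_{n_\d-1}}\,\d$. But there is no matching \emph{lower} bound on $s_{n_\d-1}$: nothing in the analysis prevents the residual from dropping below $\tau\d$ early (no lower bound on $\|F(x_n^\d)-y^\d\|$ of the form $\gtrsim s_n^{-\nu-1/2}\|\omega\|-C\d$ is ever established), and if, say, $n_\d=1$ then $s_{n_\d-1}=1/\a_0$ and your term $s_{n_\d-1}^{-\nu}\|\omega\|$ is $O(\|\omega\|)$, not $O(\|\omega\|^{1/(1+2\nu)}\d^{2\nu/(1+2\nu)})$. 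The balancing argument therefore fails in the early-stopping case. The paper avoids this entirely: using Lemma \ref{L10} with $a=0$, $b=\nu$ it shows that the \emph{whole} error admits the representation $e_{n_\d}^\d=(T^*T)^\nu w_{n_\d-1}$ with $\|w_{n_\d-1}\|\lesssim\|\omega\|$, notes that the stopping rule gives $\|T e_{n_\d}^\d\|\lesssim\|F(x_{n_\d}^\d)-y\|\lesssim\d$, and concludes by the interpolation (moment) inequality
$\|e_{n_\d}^\d\|\le\|w_{n_\d-1}\|^{1/(1+2\nu)}\|T(T^*T)^\nu w_{n_\d-1}\|^{2\nu/(1+2\nu)}$,
which yields the rate irrespective of the size of $s_{n_\d-1}$. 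To repair your proof you would need to add this step (or otherwise establish a lower bound for the residual before stopping, which is not available here).
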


Theorem \ref{T1} shows that the method (\ref{1}) together
with the discrepancy principle (\ref{DP}) defines an order optimal regularization method
for each $0< \nu\le 1/2$. This result in particular reproduces the
corresponding ones in \cite{HNS96,Jin10a,HHO09,PB10} for various iterative methods
even with an improvement by relaxing $\tau>2$ to $\tau>1$.

Nevertheless, Theorem \ref{T1} does not provide the convergence of $x_{n_\d}^\d$ to $x^\dag$ as $\d\rightarrow 0$
if there is no source condition imposed on $x_0-x^\dag$. In the next main result we will show the convergence
of $x_{n_\d}^\d$ to $x^\dag$ as $\d\rightarrow 0$ if $\{\a_n\}$ is a geometric decreasing sequence, i.e.
\begin{equation}\label{7.19.1}
\a_n =\a_0 r^n, \qquad n=0,1, \cdots
\end{equation}
for some $\a_0>0$ and $0<r<1$, which is one of the most important cases in applications.

\begin{theorem}\label{T2}
Let $F$ satisfy Assumptions \ref{A1}, let $\{g_\a\}$
satisfy Assumptions \ref{A4} and \ref{A5},  and let $\{\a_n\}$ be
a sequence of positive numbers satisfying (\ref{7.19.1}).
If $x_0-x^\dag\in {\mathcal N}(F'(x^\dag))^\perp$
and $K_0\|x_0-x^\dag\|$ is suitably small, then
$$
\lim_{\d\rightarrow 0} x_{n_\d}^\d=x^\dag
$$
for the integer $n_\d$ determined by the discrepancy principle (\ref{DP}) with $\tau>1$.
\end{theorem}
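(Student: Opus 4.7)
The plan is to establish the theorem via two auxiliary statements combined through the discrepancy principle: (A) \emph{stability}, that for each fixed $n$, $x_n^\delta\to x_n$ as $\delta\to 0$ (where $x_n$ denotes the exact-data iterate obtained from \eqref{2} with $y^\delta$ replaced by $y$); and (B) \emph{noise-free convergence}, that $x_n\to x^\dag$ as $n\to\infty$. Statement (A) follows by induction on $n$: using the Riesz--Dunford representation \eqref{RD} together with continuity of the operator-valued map $x\mapsto g_{\alpha_n}(F'(x)^*F'(x))F'(x)^*$ (inherited from analyticity of $\varphi_{\alpha_n}$ in Assumption \ref{A4} and continuity of $F'$), one propagates convergence from step $n$ to step $n+1$.

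For (B), the linearization of the iteration around $x^\dag$ gives, to leading order,
\[
x_n - x^\dag \;\approx\; \Bigl(\prod_{k=0}^{n-1} r_{\alpha_k}\bigl(F'(x^\dag)^*F'(x^\dag)\bigr)\Bigr)(x_0-x^\dag).
\]
By Assumption \ref{A5} with $\nu=0$ this spectral product is uniformly bounded by $1$ on $[0,1]$, and the geometric choice \eqref{7.19.1} forces $s_n\to\infty$, so for every fixed $\lambda>0$ the product tends to $0$. Since $x_0-x^\dag\in\mathcal{N}(F'(x^\dag))^\perp$, the spectral theorem combined with dominated convergence yields $\prod_{k=0}^{n-1} r_{\alpha_k}(F'(x^\dag)^*F'(x^\dag))(x_0-x^\dag)\to 0$. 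To pass from this linear fact to the actual iterate, I would expand $x_n-x^\dag$ as the linearized term plus a telescoping sum of nonlinear perturbations, bound each via Assumption \ref{A1} and the Section~3 inequalities, and absorb the perturbations using smallness of $K_0\|x_0-x^\dag\|$ and the geometric summability induced by \eqref{7.19.1}. A useful preliminary is that all $x_n$ lie in $x^\dag+\mathcal{N}(F'(x^\dag))^\perp$, which follows because the update direction at step $n$ lies in the range of $F'(x_n)^*=F'(x^\dag)^*R(x_n,x^\dag)^*\subset\mathcal{N}(F'(x^\dag))^\perp$.

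With (A) and (B) in hand, the proof concludes by a diagonal argument along the discrepancy principle. Given $\delta_k\to 0$, set $n_k:=n_{\delta_k}$. If $\{n_k\}$ contains a bounded subsequence, pass to a constant subsequence $n_k\equiv n^*$; stability gives $x_{n^*}^{\delta_k}\to x_{n^*}$ and hence $F(x_{n^*})=y$ in the limit, and then the linearization inequality $\|F(x)-F(x^\dag)-F'(x^\dag)(x-x^\dag)\|\le\frac{1}{2} K_0\|x-x^\dag\|\|F'(x^\dag)(x-x^\dag)\|$ combined with $x_{n^*}-x^\dag\in\mathcal{N}(F'(x^\dag))^\perp$ forces $x_{n^*}=x^\dag$. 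Otherwise $n_k\to\infty$; given $\varepsilon>0$, choose $N$ via (B) so that $\|x_N-x^\dag\|<\varepsilon$, then via (A) choose $k$ large enough that $\|x_N^{\delta_k}-x_N\|<\varepsilon$ and $n_k\ge N$, and close with a triangle inequality together with a bound $\|x_{n_k}^{\delta_k}-x_N^{\delta_k}\|\lesssim\varepsilon$ coming from the Section~3 inequalities (which in the geometric-$\alpha_n$ setting control the tail of the iteration uniformly once the residual is below $\tau\delta_k$). The main obstacle is the nonlinear bookkeeping in (B): the linear spectral argument gives convergence for free, but one must verify that the accumulated nonlinear perturbations do not destroy it, which is precisely where the smallness assumption on $K_0\|x_0-x^\dag\|$ is indispensable.
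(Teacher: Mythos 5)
Your overall architecture (noise-free convergence plus stability, glued together by the discrepancy principle) matches the paper's, but there is a genuine gap in step (B), and it is precisely the step where the paper has to work hardest. Writing $e_n:=x_n-x^\dag$, the telescoped identity gives $e_{n+1}=\prod_{j=0}^n r_{\a_j}(T^*T)e_0$ plus perturbation sums of the form $\sum_{j=0}^n\prod_{k=j+1}^n r_{\a_k}(T^*T)\,g_{\a_j}(T^*T)T^*u_j$ with $\|u_j\|\le K_0\|e_j\|\,\|Te_j\|$, plus the commutator terms handled by Lemma \ref{L10}. Without a source condition the only a priori decay available is $\|e_j\|\lesssim\|e_0\|$ and $\|Te_j\|\lesssim\|e_0\|s_j^{-1/2}$, so by Lemma \ref{L2} (with $p=q=1/2$) these sums are bounded by $CK_0\|e_0\|^2$ \emph{uniformly in $n$, with no decay}. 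Smallness of $K_0\|e_0\|$ therefore makes the perturbations small but does not make them vanish: your ``linear term tends to zero by dominated convergence, perturbations absorbed by smallness'' argument only yields $\limsup_n\|e_n\|\lesssim K_0\|e_0\|^2$, not $\|e_n\|\to 0$. The paper closes this gap with a density argument you do not have: it approximates $x_0$ by $\hat x_0$ with $\hat x_0-x^\dag\in{\mathcal R}(T^*)\subset{\mathcal R}((T^*T)^{1/4})$, proves genuine decay rates $\|\hat e_n\|\lesssim s_n^{-1/4}$ for the perturbed iteration (Lemma \ref{L7.19.2}, whose constants crucially depend only on $K_0\|\hat e_0\|$ and not on the source element), and separately proves Lipschitz stability of the whole iteration with respect to the initial guess (Lemma \ref{L9}: $\|x_n-\hat x_n\|\le 2\|x_0-\hat x_0\|$ and $\|T(x_n-\hat x_n)\|\le 2c_0 s_n^{-1/2}\|x_0-\hat x_0\|$ for all $n$). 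This is the missing idea.

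Two further points. First, your stability statement (A) -- pointwise convergence $x_n^\d\to x_n$ for each fixed $n$ -- is too weak for the case $n_{\d_k}\to\infty$. Your proposed tail bound $\|x_{n_k}^{\d_k}-x_N^{\d_k}\|\lesssim\varepsilon$ ``once the residual is below $\tau\d_k$'' cannot work, because by definition of $n_\d$ the residual stays \emph{above} $\tau\d_k$ for all $n<n_k$. What is needed is the uniform quantitative estimate $\|x_n^\d-x_n\|\lesssim s_n^{1/2}\d$ for all $n\le n_\d$ (Proposition \ref{P10}), combined with the strengthened form of the noise-free convergence $s_n^{1/2}\|Te_n\|\to 0$ (the second half of (\ref{5.6})) and the lower bound $\d\lesssim\|Te_n\|$ for $n<n_\d$ from the discrepancy principle, which together force $s_{n_k}^{1/2}\d_k\to 0$. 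Your proposal contains neither the quantitative stability estimate nor the decay of $s_n^{1/2}\|Te_n\|$. Second, your treatment of the bounded-subsequence case is essentially sound and is a mild variant of the paper's (you conclude $x_{n^*}=x^\dag$ from $F(x_{n^*})=y$, the tangential cone inequality and $x_{n^*}-x^\dag\in{\mathcal N}(T)^\perp$, whereas the paper observes that the iteration stagnates at $x_{n^*}$ and invokes $x_j\to x^\dag$); but it still relies on (A) and (B) being established, so it does not rescue the argument.
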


Theorem \ref{T2} extends the corresponding result in \cite{Jin10a} for the Levenberg-Marquardt method
to a general class of methods given by (\ref{2}). The convergence result in Theorem \ref{T2}
still holds if (\ref{7.19.1}) is replaced by  the condition
\begin{equation}\label{7.29.10}
d_0 r^n\le \a_n \le d_1 r^n, \quad n=0,1,\cdots
\end{equation}
for some constants $0<d_0\le d_1<\infty$ and $0<r<1$. In fact, the proof of Theorem \ref{T2} given in
Section 5 requires only $\{\a_n\}$ to satisfy (\ref{60}) and (\ref{7.19.2}) which can be achieved if
$\{\a_n\}$ satisfies (\ref{7.29.10}). It would be interesting if such a convergence result can be proved for a general sequence
$\{\a_n\}$ satisfying (\ref{60}) only. This, however, remains open; new technique seems to be explored.

We conclude this section with several examples of the methods (\ref{2})
in which the spectral filter functions $\{g_\a\}$ have been shown in \cite{JT2010} to
satisfy Assumptions \ref{A4} and \ref{A5}:

(a)  We first consider for $\a>0$ the function
$g_\alpha$ given by
$$
g_\alpha(\lambda)
=\frac{(\a+\la)^N-\a^N}{\la (\a+\la)^N}
$$
where $N\ge 1$ is a fixed integer. This function arises from the
iterated Tikhonov regularization of order $N$ for linear ill-posed
problems. The corresponding method (\ref{2}) becomes
\begin{align*}
u_{n,0}&=x_n^\d,\\
u_{n, l+1}&=u_{n,l}-\left(\a_n I +F'(x_n^\d)^* F'(x_n^\d)\right)^{-1} F'(x_n^\d)^*
\left(F(x_n^\d)-y^\d -F'(x_n^\d)(x_n^\d-u_{n,l})\right), \\
&\qquad\qquad\qquad\qquad \qquad\qquad\qquad\qquad\qquad\qquad \quad l=0, \cdots, N-1,\\
x_{n+1}^\d&= u_{n, N}.
\end{align*}
When $N=1$, this is the Levenberg-Marquardt method (see \cite{H97,Jin10a}).

(b) We consider the method (\ref{2}) with $g_\a$ given by
$$
g_\alpha(\lambda)=\frac{1}{\lambda}\left(1-e^{-\lambda/\alpha}\right)
$$
which arises from the asymptotic regularization for linear
ill-posed problems. In this method, the iterative sequence $\{x_n^\d\}$ is
equivalently defined as $x_{n+1}^\d:=x(1/\a_n)$, where $x(t)$ is the unique solution of
the initial value problem
\begin{align*}
&\frac{d}{d t} x(t)= F'(x_n^\d)^* \left(y^\d-F(x_n^\d)+F'(x_n^\d)(x_n^\d-x(t))\right), \quad t>0,\\
&x(0)=x_n^\d.
\end{align*}
This is the so called exponential Euler iteration considered in \cite{HHO09}.

(c) For $0<\a\le 1$ consider the function $g_\alpha$ given by
\begin{equation}\label{g20}
g_\alpha(\lambda)=\sum_{l=0}^{[1/\alpha]-1}(1-\lambda)^l=\frac{1-(1-\la)^{[1/\a]}}{\la}
\end{equation}
which arises from the linear Landweber iteration, where $[1/\a]$ denotes the largest integer not
greater than $1/\a$. The method (\ref{2}) then becomes
\begin{align*}
u_{n,0}&=x_n^\d,\\
u_{n, l+1}&=u_{n,l}-F'(x_n^\d)^* \left(F(x_n^\d)-y^\d -F'(x_n^\d)(x_n^\d-u_{n,l})\right), \quad 0\le l\le [1/\a_n]-1,\\
x_{n+1}^\d&= u_{n, [1/\a_n]}.
\end{align*}
When $\a_n=1$ for all $n$, this method reduces
to the Landweber iteration in \cite{HNS96}.

(d) For $0<\a\le 1$ consider the function
$$
g_\alpha(\lambda)=\sum_{i=1}^{[1/\alpha]}(1+\lambda)^{-i}=\frac{1-(1+\la)^{-[1/\a]}}{\la}
$$
arising from the Lardy method for linear inverse problems. Then the method (\ref{2})
becomes
\begin{align*}
u_{n,0}&=x_n^\d,\\
u_{n, l+1}&=u_{n,l}-\left(I+F'(x_n^\d)^* F'(x_n^\d)\right)^{-1}
F'(x_n^\d)^* \left(F(x_n^\d)-y^\d -F'(x_n^\d)(x_n^\d-u_{n,l})\right),\\
&\qquad\qquad\qquad\qquad\qquad\qquad\qquad\qquad \qquad l=0, \cdots, [1/\a_n]-1,\\
x_{n+1}^\d&= u_{n, [1/\a_n]}.
\end{align*}
When $\a_n=1$ for all $n$, this is the so called first-stage Runge-Kutta type regularization considered in \cite{PB10}.

\section{\bf Some crucial inequalities}
\setcounter{equation}{0}

The following consequence of the above assumptions on $F$ and $\{g_\a\}$ plays a crucial
role in the convergence analysis.

\begin{lemma}\label{L10}
Let $\{g_\a\}$ satisfy Assumptions \ref{A4} and \ref{A5}, let
$F$ satisfy Assumption \ref{A1}, and let $\{\a_n\}$ be a sequence of positive numbers.
Let $T=F'(x^\dag)$ and for any $x\in B_\rho(x^\dag)$ let $T_x=F'(x)$. Let $0\le a\le 1/2$.
Then for $0 \le b \le 1/2+a$ there holds
\begin{align*}
&(T^*T)^a \prod_{k=j+1}^n r_{\a_k}(T^*T) \left[ g_{\a_j}(T^*T)T^*
-g_{\a_j}(T_x^*T_x)T_x^*\right] =(T^*T)^b S_j
\end{align*}
for some bounded linear operator $S_j:Y\to X$ satisfying
\begin{footnote}
{Throughout this paper we will always use $C$ to denote a generic
constant independent of $\d$ and $n$. We will also use the
convention $\Phi\lesssim \Psi$ to mean that $\Phi\le C \Psi$ for
some generic constant $C$.}
\end{footnote}
\begin{align*}
\|S_j\|\lesssim \frac{1}{\a_j} (s_n-s_{j-1})^{-1/2-a +b} K_0\|x-x^\dag\|,
\end{align*}
where $j=0, 1, \cdots, n$.
\end{lemma}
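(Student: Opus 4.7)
My plan is to decompose $g_{\a_j}(\la) = (\a_j + \la)^{-1} + \varphi_{\a_j}(\la)$ as in Assumption \ref{A4}, splitting the bracket into a rational part $(\a_j I + T^*T)^{-1}T^* - (\a_j I + T_x^*T_x)^{-1}T_x^*$ plus an analytic part $\varphi_{\a_j}(T^*T)T^* - \varphi_{\a_j}(T_x^*T_x)T_x^*$. The analytic part I rewrite via the Riesz--Dunford formula (\ref{RD}) as
$$\frac{1}{2\pi i}\int_{\Gamma_{\a_j}}\varphi_{\a_j}(z)\left[(zI - T^*T)^{-1}T^* - (zI - T_x^*T_x)^{-1}T_x^*\right]dz,$$
so that in both parts it suffices to control differences of the form $(\mu I - T^*T)^{-1}T^* - (\mu I - T_x^*T_x)^{-1}T_x^*$ with $\mu = -\a_j$ (rational) or $\mu = z \in \Gamma_{\a_j}$ (analytic).

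To handle this common difference, I combine Assumption \ref{A1}(a), which yields $T_x = RT$ with $\|I - R\| \le K_0\|x - x^\dag\|$ and hence $\|TT^* - T_xT_x^*\| \lesssim K_0\|x - x^\dag\|$, with the intertwining identity $(\mu I - T^*T)^{-1}T^* = T^*(\mu I - TT^*)^{-1}$ and the second-resolvent identity $X^{-1} - Y^{-1} = X^{-1}(Y - X)Y^{-1}$, rewriting the difference as a sum of two terms each of which explicitly carries a factor of norm $\lesssim K_0\|x - x^\dag\|$. Left-multiplication by $(T^*T)^a\prod_{k=j+1}^n r_{\a_k}(T^*T)$ and use of the polar decomposition $T^* = (T^*T)^{1/2}V^*$ (with $\|V^*\| \le 1$, and $V^*$ intertwining functions of $T^*T$ and $TT^*$) then let me pull $(T^*T)^b$ out on the left, leaving the residual spectral expression $(T^*T)^{a+1/2-b}\prod_{k=j+1}^n r_{\a_k}(T^*T)(\mu I - T^*T)^{-1}$ to estimate.

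For the rational summand, (\ref{g3}) applied with $\nu = a + 1/2 - b \in [0, 1]$ bounds this residual by $2\a_j^{\nu - 1}(1 + \a_j(s_n - s_j))^{-\nu}$; the algebraic identity $1 + \a_j(s_n - s_j) = \a_j(s_n - s_{j-1})$ then collapses the two factors into precisely $2\a_j^{-1}(s_n - s_{j-1})^{b - a - 1/2}$, matching the claim. For the analytic summand, (\ref{200}) controls each resolvent $(zI - T^*T)^{-1}$ and $(zI - T_xT_x^*)^{-1}$ (the latter uses $\|T_x\| \le 1$, valid once $K_0\|x-x^\dag\|$ is small), (\ref{g1}) or (\ref{g3}) handles the spectral factor on $T^*T$, and the $dz$-integration is closed using (\ref{2.8}) together with the lower bound $|z| \ge \a_j/2$ from (\ref{2.6}) to produce the required $\a_j^{-1}$ prefactor. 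The principal technical obstacle is the exponent bookkeeping: ensuring that every spectral power arising in intermediate steps lies in $[0, 1]$ where Assumptions \ref{A4} and \ref{A5} apply (the constraints $0 \le a \le 1/2$ and $0 \le b \le a + 1/2$ are exactly what is needed here, with the $1/2$ gained from the polar factor of $T^*$), and coordinating the $\a_j$-decay from the resolvent bounds with the $s_n$-decay from the filter product so that the final estimate assembles into the single uniform bound $\a_j^{-1}(s_n - s_{j-1})^{-1/2 - a + b}K_0\|x - x^\dag\|$.
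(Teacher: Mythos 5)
Your high-level architecture (split $g_{\a_j}(\la)=(\a_j+\la)^{-1}+\varphi_{\a_j}(\la)$, Riesz--Dunford for the analytic part, polar decomposition, (\ref{g3}), (\ref{200}), (\ref{2.8})) is the same as the paper's, and your scalar bookkeeping is right — in particular the identity $1+\a_j(s_n-s_j)=\a_j(s_n-s_{j-1})$ that converts the output of (\ref{g3}) into $\a_j^{-1}(s_n-s_{j-1})^{-1/2-a+b}$. The gap is in the claim that the common difference $(\mu I-T^*T)^{-1}T^*-(\mu I-T_x^*T_x)^{-1}T_x^*$, treated via the intertwining $(\mu I-T^*T)^{-1}T^*=T^*(\mu I-TT^*)^{-1}$ plus the second resolvent identity for $TT^*$ versus $T_xT_x^*$, leaves only the single residual spectral expression $(T^*T)^{a+1/2-b}\prod_{k=j+1}^n r_{\a_k}(T^*T)(\mu I-T^*T)^{-1}$ to estimate. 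It does not. Your route produces the two terms $T^*(I-R_x^*)(\mu I-TT^*)^{-1}$ and $T_x^*(\mu I-TT^*)^{-1}(TT^*-T_xT_x^*)(\mu I-T_xT_x^*)^{-1}$. In the first, the perturbation $(I-R_x^*)$ is wedged between $T^*=(T^*T)^{1/2}U$ (your $V^*$) and the resolvent, so the resolvent cannot be intertwined back next to $\prod r_{\a_k}(T^*T)$ and (\ref{g3}) is not applicable; bounding the factors separately gives only $(s_n-s_j)^{-(1/2+a-b)}\cdot\a_j^{-1}K_0\|x-x^\dag\|$, which misses the crucial shift from $s_j$ to $s_{j-1}$ and is useless at $j=n$ (empty product): there, with $a=b=0$, you get $\a_n^{-1}K_0\|x-x^\dag\|$ against the required $\a_n^{-1}(s_n-s_{n-1})^{-1/2}K_0\|x-x^\dag\|=\a_n^{-1/2}K_0\|x-x^\dag\|$. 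In the second term there is a dangling resolvent $(\mu I-T_xT_x^*)^{-1}$ at the far right, not absorbed by any adjacent $T_x$, which contributes an extra uncontrolled factor $\a_j^{-1}$. Either defect destroys the claimed bound.

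The repair — and what the paper actually does — is to keep the resolvent difference on the $X$ side with $T_x^*$ remaining on the far right: write the difference as $(\mu I-T^*T)^{-1}(T^*-T_x^*)+(\mu I-T^*T)^{-1}(T_x^*T_x-T^*T)(\mu I-T_x^*T_x)^{-1}T_x^*$ and factor $T_x^*T_x-T^*T=T^*\left[(R_x^*-I)T_x+(R_x-I)T\right]^{\phantom{*}}\!\!$ up to sign, so that every term has the form $(\mu I-T^*T)^{-1}(T^*T)^{1/2}U\cdot S$ with $\|S\|\lesssim K_0\|x-x^\dag\|$: the trailing resolvent is absorbed via $T_x(\mu I-T_x^*T_x)^{-1}T_x^*=T_xT_x^*(\mu I-T_xT_x^*)^{-1}$ (bounded by (\ref{200})), and the left factor $(\mu I-T^*T)^{-1}(T^*T)^{1/2}$ sits adjacent to $\prod_{k=j+1}^n r_{\a_k}(T^*T)$ so that (\ref{g3}) applies and yields exactly $\a_j^{-1}(s_n-s_{j-1})^{-1/2-a+b}$. (The paper organizes this as three terms $J_1$, $J_2$, $J_3$, handling $g_{\a_j}(T^*T)(T^*-T_x^*)$ in one piece by (\ref{g2}) rather than splitting it, but the mechanism is the one just described.) Until you rearrange your resolvent identities this way, the estimate for $\|S_j\|$ is not established.
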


\begin{proof} Let $\eta_\a(\lambda)=(\a+\lambda)^{-1}$ and $\varphi_\a(\lambda)=g_\a(\lambda)-(\a+\lambda)^{-1}$.
We can write
$$
(T^*T)^a \prod_{k=j+1}^n r_{\a_k}(T^*T) \left[ g_{\a_j}(T^*T)T^*
-g_{\a_j}(T_x^*T_x)T_x^*\right] =J_1+J_2 +J_3,
$$
where
\begin{align*}
J_1&:=(T^*T)^a \prod_{k=j+1}^n r_{\a_k}(T^*T) g_{\a_j}(T^*T) [T^*-T_x^*],\\
J_2&:=(T^*T)^a \prod_{k=j+1}^n  r_{\a_k}(T^*T) \left[ \eta_{\a_j}(T^*T)-\eta_{\a_j}(T_x^* T_x)\right] T_x^*,\\
J_3&:=(T^*T)^a \prod_{k=j+1}^n  r_{\a_k}(T^*T) \left[ \varphi_{\a_j}(T^*T)-\varphi_{\a_j}(T_x^* T_x)\right] T_x^*.
\end{align*}
It suffices to show that for each $J_l$ there holds $J_l=(T^*T)^\nu S_l$ for some
bounded linear operator $S_l: Y\to X$ satisfying the desired estimate.
We will use the polar decomposition for linear operators which implies that $T^*=(T^*T)^{1/2} U$
for some partial isometry $U:Y\to X$.

By using Assumption \ref{A1} we have
$T^*-T_x^*=T^* (I-R_x)^*$, where $R_x:=R(x, x^\dag)$. This
together with the polar decomposition on $T^*$ gives
\begin{equation}\label{polar2}
T^*-T_x^*=(T^*T)^{1/2} U(I-R_x)^*.
\end{equation}
Consequently we can write $J_1=(T^*T)^b S_1$ with
$$
S_1=(T^*T)^{1/2+a -b} g_{\a_j}(T^*T) \prod_{k=j+1}^n r_{\a_k}(T^*T) U(I-R_x)^*.
$$
Since $0\le 1/2+a-b\le 1$, it follows from Assumption \ref{A5} that
\begin{align*}
\|S_1\| & \le \sup_{0\le \lambda\le 1} \left(\lambda^{1/2+a-b}
g_{\a_j}(\lambda) \prod_{k=j+1}^n r_{\a_k}(\lambda)\right) \|I-R_x\|\\
& \lesssim \frac{1}{\a_j} (s_n-s_{j-1})^{-1/2-a+b} K_0\|x-x^\dag\|.
\end{align*}
This shows the desired conclusion on $J_1$.

Next we consider $J_2$. Note that
\begin{align*}
\eta_{\a_j}(T^*T)-\eta_{\a_j}(T_x^*T_x)
&=(\a_j I+T^*T)^{-1} T^*(T_x-T) (\a_j I+T_x^*T_x)^{-1}\\
& +(\a_j I+T^*T)^{-1} (T_x^*-T^*) T_x(\a_j I+T_x^*T_x)^{-1}.
\end{align*}
Plugging this formula into the expression of $J_2$, and using the polar
decomposition on $T^*$ and the identity (\ref{polar2}), we have
$J_2=(T^*T)^b S_2$, where
\begin{align*}
S_2 &= \prod_{k=j+1}^n r_{\a_k}(T^*T) (\a_j I+T^*T)^{-1} (T^*T)^{1/2+a-b} U (T_x-T) (\a_j I+T_x^*T_x)^{-1} T_x^* \\
 &+ \prod_{k=j+1}^n r_{\a_k}(T^*T) (\a_j I+T^*T)^{-1} (T^*T)^{1/2+a-b} U(R_x-I)^* T_x T_x^* (\a_j I+T_x T_x^*)^{-1}.
\end{align*}
With the help of Assumption \ref{A1} we have
$$
\|(T_x-T)(\a_j I +T_x^* T_x)^{-1} T_x^*\|\le K_0\|x-x^\dag\|.
$$
Therefore, it follows from (\ref{g3}) that
\begin{align*}
\|S_2\| &\le \sup_{0\le\lambda\le 1}
\left(\lambda^{1/2+a-b} (\a_j+\lambda)^{-1} \prod_{k=j+1}^n r_{\a_k}(\lambda)\right)
\|(T_x-T)(\a_j I +T_x^*T_x)^{-1} T_x^*\|\\
& + \sup_{0\le\lambda\le 1}
\left(\lambda^{1/2+a-b} (\a_j+\lambda)^{-1} \prod_{k=j+1}^n r_{\a_k}(\lambda)\right)
\|(R_x-I)^* T_x T_x^* (\a_j I +T_x^*T_x)^{-1}\|\\
&\lesssim \a_j^{a-b-1/2} \left(1+\a_j(s_n-s_j)\right)^{-1/2-a+b} K_0\|x-x^\dag\|\\
&=\frac{1}{\a_j} (s_n-s_{j-1})^{-1/2-a+b} K_0\|x-x^\dag\|.
\end{align*}

It remains to consider $J_3$.  Since Assumption \ref{A4} implies
that $\varphi_{\a_j}(z)$ is analytic in $D_{\a_j}$, we have from
the Riesz-Dunford formula (\ref{RD}) that
\begin{align}\label{300}
J_3 =\frac{1}{2\pi i} \int_{\Gamma_{\a_j}} \varphi_{\a_j}(z) L_j(z) dz,
\end{align}
where
$$
L_j(z):=(T^*T)^a \prod_{k=j+1}^n r_{\a_k}(T^*T)
\left[(z I-T^*T)^{-1}-(z I -T_x^* T_x)^{-1}\right] T_x^*.
$$
Using the decomposition
\begin{align*}
(z I-T^*T)^{-1}-(z I-T_x^* T_x)^{-1} &=(z I -T^*T)^{-1} T^* (T-T_x) (z I-T_x^* T_x)^{-1}\\
& + (z I-T^*T)^{-1} (T^*-T_x^*) T_x (z I-T_x^* T_x)^{-1}
\end{align*}
together with the polar decomposition on $T^*$ and the identity (\ref{polar2}), we obtain
$L_j(z)=(T^*T)^b \tilde{L}_j (z)$, where
\begin{align*}
\tilde{L}_j(z)&= \prod_{k=j+1}^n r_{\a_k}(T^*T)
(z I-T^*T)^{-1}(T^*T)^{1/2+a-b} (T-T_x) (z I -T_x^* T_x)^{-1} T_x^*\\
& + \prod_{k=j+1}^n r_{\a_k}(T^*T)
(z I-T^*T)^{-1} (T^*T)^{1/2+a-b} U (I-R_x)^* T_x T_x^*(z I -T_x T_x^*)^{-1}.
\end{align*}
Combining with (\ref{300}) gives $J_3=(T^*T)^b S_3$, where
$$
S_3=\frac{1}{2\pi i} \int_{\Gamma_{\a_j}} \varphi_{\a_j}(z) \tilde{L}_j(z) dz.
$$
We need to estimate $\|S_3\|$. We first estimate $\tilde{L}_j(z)$ for $z\in \Gamma_{\a_j}$.
With the help of Assumption \ref{A1} and (\ref{200}), we have
\begin{align*}
\|(T-T_x)(z I&-T_x^*T_x)^{-1} T_x^* \|
\lesssim K_0\|x-x^\dag\|.
\end{align*}
Since $|z|\ge \a_j/2$ and $|z-\la|^{-1}\le b_0 (|z|+\la)^{-1}$ for $z\in \Gamma_{\a_j}$,
we have from (\ref{g3}) that
\begin{align*}
\|\tilde{L}_j(z)\| & \lesssim  \sup_{0\le \la \le 1}
\left(\la^{1/2+a-b} |z-\la|^{-1} \prod_{k=j+1}^n r_{\a_k}(\la)\right) K_0\|x-x^\dag\|\\
&\lesssim \sup_{0\le \la \le 1}
\left(\la^{1/2+a-b} (|z|+\lambda)^{-1} \prod_{k=j+1}^n r_{\a_k}(\la)\right) K_0\|x-x^\dag\|\\
& \lesssim |z|^{a-b-1/2} \left(1+(s_n-s_j)|z|\right)^{-1/2-a+b} K_0\|x-x^\dag\|\\
& \lesssim \a_j^{a-b-1/2} \left(1+(s_n-s_j)\a_j \right)^{-1/2-a+b} K_0\|x-x^\dag\|\\
&=\frac{1}{\a_j} (s_n-s_{j-1})^{-1/2-a+b} K_0\|x-x^\dag\|.
\end{align*}
Therefore, it follows from Assumption \ref{A4} that
\begin{align*}
\|S_3\| & \lesssim \frac{1}{\a_j} (s_n-s_{j-1})^{-1/2-a+b}
K_0\|x-x^\dag\| \int_{\Gamma_{\a_j}} |\varphi_{\a_j}(z)||d z|\\
& \lesssim \frac{1}{\a_j} (s_n-s_{j-1})^{-1/2-a+b}
K_0\|x-x^\dag\|.
\end{align*}
The proof is therefore complete.
\end{proof}

In the proof of Theorem \ref{T2} we will also need the following
inequality which can be obtained by essentially the same argument
in the proof of Lemma \ref{L10}.

\begin{lemma}\label{L11}
Let $\{g_\a\}$ satisfy Assumptions \ref{A4} and \ref{A5}, let
$F$ satisfy Assumption \ref{A1}, and let $\{\a_n\}$ be a sequence of positive numbers.
Let $T=F'(x^\dag)$ and for any $x\in B_\rho(x^\dag)$ let $T_x=F'(x)$.
Then for $0\le \mu\le 1/2$ there holds
\begin{align*}
&\left\| (T^*T)^\mu  \prod_{k=j+1}^n r_{\a_k}(T^*T) \left[ g_{\a_j}(T_x^*T_x)T_x^*
-g_{\a_j}(T_{\bar x}^*T_{\bar x})T_{\bar x}^*\right]\right\|\\
&\qquad \qquad\qquad \qquad \lesssim \frac{1}{\a_j} (s_n-s_{j-1})^{-\mu-1/2}
\left(1+K_0\|x-x^\dag\|\right) K_0\|x-\bar{x}\|
\end{align*}
for all $x, \bar{x}\in B_\rho(x^\dag)$, where $j=0, 1, \cdots, n$.
\end{lemma}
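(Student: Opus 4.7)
The plan is to replicate the three-term decomposition of Lemma~\ref{L10} with the pair $(T, T_x)$ replaced by $(T_x, T_{\bar x})$ in the ``inner'' positions, while keeping $T$ as the outer operator. I would write
\begin{align*}
g_{\a_j}(T_x^*T_x)T_x^* - g_{\a_j}(T_{\bar x}^*T_{\bar x})T_{\bar x}^*
&= g_{\a_j}(T_x^*T_x)[T_x^*-T_{\bar x}^*] \\
&\quad + [\eta_{\a_j}(T_x^*T_x)-\eta_{\a_j}(T_{\bar x}^*T_{\bar x})]T_{\bar x}^* \\
&\quad + [\varphi_{\a_j}(T_x^*T_x)-\varphi_{\a_j}(T_{\bar x}^*T_{\bar x})]T_{\bar x}^*,
\end{align*}
label the three summands $J_1, J_2, J_3$, and estimate $(T^*T)^\mu \prod_{k=j+1}^n r_{\a_k}(T^*T)\, J_i$ for each $i$.

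The substitute for the identity (\ref{polar2}) is obtained by applying Assumption~\ref{A1}(a) twice: first with base point $\bar x$ to get $T_x - T_{\bar x} = (R(x,\bar x)-I) T_{\bar x}$, with $\|R(x,\bar x)-I\|\le K_0\|x-\bar x\|$; then with base point $x^\dag$ to write $T_{\bar x}^* = T^* R(\bar x, x^\dag)^* = (T^*T)^{1/2} U R(\bar x,x^\dag)^*$ via the polar decomposition of $T^*$. Combining these yields
\[
T_x^* - T_{\bar x}^* = (T^*T)^{1/2}\, U\, R(\bar x, x^\dag)^* (R(x,\bar x)^*-I),
\]
whose multiplier of $(T^*T)^{1/2}$ has norm at most $(1+K_0\|\bar x-x^\dag\|)K_0\|x-\bar x\|$, which is absorbed into $(1+K_0\|x-x^\dag\|)K_0\|x-\bar x\|$ since $\|\bar x-x^\dag\|\le \|x-x^\dag\|+\|x-\bar x\|$. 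The scalar estimates from Lemma~\ref{L10}, such as $\|(T_x-T)(\a_j I+T_x^*T_x)^{-1}T_x^*\|\lesssim K_0\|x-x^\dag\|$, transfer verbatim after replacing $x^\dag$ by $\bar x$, giving $\|(T_x-T_{\bar x})(\a_j I+T_{\bar x}^*T_{\bar x})^{-1}T_{\bar x}^*\|\lesssim K_0\|x-\bar x\|$.

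With these inputs, $J_1$ is handled by substituting the polar-type factorization above and applying (\ref{g2}) with $\nu=\mu+1/2$; $J_2$ uses the resolvent identity
\[
\eta_{\a_j}(T_x^*T_x)-\eta_{\a_j}(T_{\bar x}^*T_{\bar x}) = (\a_j I+T_x^*T_x)^{-1}\bigl[T_{\bar x}^*(T_{\bar x}-T_x)+(T_{\bar x}^*-T_x^*)T_x\bigr](\a_j I+T_{\bar x}^*T_{\bar x})^{-1}
\]
and decomposes into the same two pieces as in the $J_2$ step of Lemma~\ref{L10}; $J_3$ is handled by the Riesz--Dunford formula (\ref{RD}) together with the contour bound (\ref{200}), exactly mirroring the $J_3$ argument. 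The factor $(1+K_0\|x-x^\dag\|)$ appears from repeated use of $\|R(\cdot,x^\dag)\|\le 1+K_0\|\cdot-x^\dag\|$ whenever one returns to a function of $T^*T$ via the polar decomposition.

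The main obstacle, absent in Lemma~\ref{L10}, is the loss of commutativity: the operators $g_{\a_j}(T_x^*T_x)$, $\eta_{\a_j}(T_x^*T_x)$ and $\varphi_{\a_j}(T_x^*T_x)$ no longer commute with the outer factor $(T^*T)^\mu\prod r_{\a_k}(T^*T)$, so the scalar estimates of Assumption~\ref{A5} cannot be applied directly. I would overcome this by splitting, e.g., $g_{\a_j}(T_x^*T_x) = g_{\a_j}(T^*T) + [g_{\a_j}(T_x^*T_x)-g_{\a_j}(T^*T)]$: the first piece commutes with the outer factor and produces the leading bound, while the second piece is a perturbation of size $K_0\|x-x^\dag\|$ that is itself controlled by Lemma~\ref{L10}-type arguments and contributes the factor $K_0\|x-x^\dag\|$ in the final $(1+K_0\|x-x^\dag\|)$ term.
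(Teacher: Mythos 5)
Your proposal is essentially the paper's approach: the paper offers no separate proof of Lemma \ref{L11}, saying only that it follows by ``essentially the same argument'' as Lemma \ref{L10}, and your three-term decomposition together with the perturbation splitting $g_{\a_j}(T_x^*T_x)=g_{\a_j}(T^*T)+[g_{\a_j}(T_x^*T_x)-g_{\a_j}(T^*T)]$ (and likewise for $\eta_{\a_j}$ and $\varphi_{\a_j}$) is precisely such an adaptation, correctly identifying the one new difficulty (loss of commutativity with the outer factor) and a workable fix. One detail to nail down when executing the perturbation pieces of $J_2$ and $J_3$: after moving the left-hand resolvent to a function of $T^*T$, the leftover factor begins with $T$ rather than $T^*$, and you must invoke Assumption \ref{A1} in the reversed form $T=R(x^\dag,x)T_x$ so that quantities like $\|T(\a_j I+T_x^*T_x)^{-1}T_{\bar x}^*\|$ are bounded by a constant rather than by $\a_j^{-1/2}$, which would otherwise ruin the $(s_n-s_{j-1})^{-\mu-1/2}/\a_j$ rate.
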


\section{\bf Rates of convergence: proof of Theorem \ref{T1}}
\setcounter{equation}{0}

We begin with the following lemma which follows from \cite[Lemma 4.3]{H2010} and its proof;
a simplified argument can be found in \cite{JT2010}.

\begin{lemma}\label{L2}
Let $\{\a_n\}$ be a sequence of positive numbers satisfying $\a_n\le c_1$, and let
$s_n$ be defined by (\ref{a1}).
Let $p\ge 0$ and $q\ge0$ be two numbers. Then we have
$$
\sum_{j=0}^n \frac{1}{\a_j} (s_n-s_{j-1})^{-p} s_j^{-q}\le C_0
s_n^{1-p-q} \left\{\begin{array}{lll}
1, & \max\{p,q\}<1,\\
\log (1+s_n), & \max\{p, q\}=1,\\
s_n^{\max\{p, q\}-1}, & \max\{p, q\}>1,
\end{array}\right.
$$
where $C_0$ is a constant depending only on $c_1$, $p$ and $q$.
\end{lemma}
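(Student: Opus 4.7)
The plan is to prove Lemma \ref{L2} by an integral-comparison argument, using the fact that $1/\a_j = s_j-s_{j-1}$ lets us view the sum as a Riemann-sum-type quantity associated to the integral $\int_0^{s_n}(s_n-s)^{-p}s^{-q}\,ds$. The core monotonicity observations are that, on each interval $[s_{j-1},s_j]$, the integrand $(s_n-s)^{-p}$ dominates its left endpoint value $(s_n-s_{j-1})^{-p}$ and $s^{-q}$ dominates its right endpoint value $s_j^{-q}$, so that
$$
\frac{1}{\a_j}(s_n-s_{j-1})^{-p}s_j^{-q}=(s_j-s_{j-1})(s_n-s_{j-1})^{-p}s_j^{-q}\le\int_{s_{j-1}}^{s_j}(s_n-s)^{-p}s^{-q}\,ds,
$$
at least when the latter is finite. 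When $p<1$ and $q<1$ the sum of these integrals over $j=0,\dots,n$ equals $\int_0^{s_n}(s_n-s)^{-p}s^{-q}\,ds=B(1-q,1-p)\,s_n^{1-p-q}$ by the standard beta-integral evaluation, which gives the first case directly.

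For the remaining cases the clean integral bound fails at one of the endpoints, so I would split the sum at an index $m\in\{0,\dots,n\}$ chosen so that $s_m$ is comparable to $s_n$, for example the largest index with $s_m\le s_n/2$ (using $\lim s_n=\infty$ plus $\a_n\le c_1$ to ensure such a splitting is sensible; note also $s_{n+1}\le c_0 s_n$ is not needed here but $\a_n\le c_1$ will be). On the ``left'' piece $0\le j\le m$ we use $(s_n-s_{j-1})^{-p}\lesssim s_n^{-p}$ and then bound $\sum_{j=0}^m (s_j-s_{j-1})\,s_j^{-q}\le\int_0^{s_m}s^{-q}\,ds$ which gives $s_n^{-p}\cdot s_m^{1-q}\lesssim s_n^{1-p-q}$ when $q<1$ (and a $\log(1+s_n)$ factor when $q=1$, a bounded contribution when $q>1$). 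On the ``right'' piece $m<j\le n$ we use $s_j^{-q}\lesssim s_n^{-q}$ and have to control $\sum_{j=m+1}^{n}(s_j-s_{j-1})(s_n-s_{j-1})^{-p}$.

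For this last sum I would isolate the terminal index $j=n$, where $(s_n-s_{n-1})^{-p}=\a_n^p\le c_1^p$, and for $m<j<n$ apply the same pointwise monotonicity to get
$$
\sum_{j=m+1}^{n-1}(s_j-s_{j-1})(s_n-s_{j-1})^{-p}\le\int_{s_m}^{s_{n-1}}(s_n-s)^{-p}\,ds,
$$
which is $\lesssim s_n^{1-p}$ if $p<1$, equals $\log\bigl((s_n-s_m)/(s_n-s_{n-1})\bigr)\lesssim\log(1+s_n)$ if $p=1$ (using $s_n-s_{n-1}=1/\a_n\ge 1/c_1$), and is bounded by $(s_n-s_{n-1})^{1-p}/(p-1)\lesssim 1$ if $p>1$. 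Combining with the contribution of $j=n$ (which is $\a_n^{p-1}s_n^{-q}\lesssim s_n^{-q}$) gives, on the right piece, a bound of $s_n^{-q}\cdot s_n^{\max\{1-p,\,0\}}$ up to a logarithmic factor exactly in the critical case. Adding the two pieces produces $s_n^{1-p-q}$ times $1$, $\log(1+s_n)$, or $s_n^{\max\{p,q\}-1}$ according to whether $\max\{p,q\}$ is $<1$, $=1$, or $>1$.

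The routine part is evaluating the three integrals; the main obstacle is really the bookkeeping at the two boundary indices $j=0$ and $j=n$, where the integral comparison degenerates (the factor $s^{-q}$ blows up at $s=0$ when $q\ge 1$, and $(s_n-s)^{-p}$ blows up at $s=s_n$ when $p\ge 1$). These are handled by isolating those terms and using $s_0=1/\a_0$ and $s_n-s_{n-1}=1/\a_n$ together with the uniform bound $\a_n\le c_1$ to absorb any apparent singularities into the constant $C_0$, which then depends only on $c_1$, $p$ and $q$.
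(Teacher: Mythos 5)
Your integral-comparison argument is correct, and it is essentially the standard proof of this estimate; note that the paper itself gives no proof of Lemma \ref{L2}, deferring to \cite[Lemma 4.3]{H2010} and the simplified argument in \cite{JT2010}, both of which proceed by exactly this kind of Riemann-sum comparison with $\int_0^{s_n}(s_n-s)^{-p}s^{-q}\,ds$ after isolating the singular endpoint terms. Two small points to tidy in the write-up: the splitting index $m$ need not exist for a fixed $n$ (take the left piece to be empty when $s_0>s_n/2$; no appeal to $s_n\to\infty$ is needed, nor available, since the lemma only assumes $\a_n\le c_1$), and the parenthetical claim $\a_n^{p-1}\lesssim 1$ for the $j=n$ term is valid only for $p\ge 1$ --- for $p<1$ one uses $\a_n^{p-1}=(s_n-s_{n-1})^{1-p}\le s_n^{1-p}$ instead, which your final right-piece bound $s_n^{-q}\, s_n^{\max\{1-p,\,0\}}$ already accommodates.
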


In order to derive the necessary estimates on $x_n^\d-x^\dag$, we need
some useful identities. For simplicity of presentation, in this section we set
$$
e_n^\d:=x_n^\d-x^\dag, \quad  T:=F'(x^\dag)  \quad \mbox{and} \quad T_n:=F'(x_n^\d).
$$
It follows from (\ref{2}) that
\begin{align*}
e_{n+1}^\d&=e_n^\d- g_{\a_n}\left(T_n^* T_n\right) T_n^* (F(x_n^\d)-y^\d).
\end{align*}
Let
$$
u_n:=F(x_n^\d)-y- T (x_n^\d-x^\dag).
$$
Then we can write
\begin{align}\label{400}
e_{n+1}^\d&= e_n^\d- g_{\a_n}(T^*T) T^* (F(x_n^\d)-y^\d) \nonumber\\
&\quad\, - \left[g_{\a_n}(T_n^*T_n)T_n^* -g_{\a_n}(T^*T) T^*\right] (F(x_n^\d)-y^\d) \nonumber\\
&= r_{\a_n}(T^*T) e_n^\d - g_{\a_n}(T^*T) T^* (y-y^\d+ u_n) \nonumber\\
&\quad\, - \left[g_{\a_n}(T_n^*T_n)T_n^* -g_{\a_n}(T^*T)T^* \right] (F(x_n^\d)-y^\d).
\end{align}
By telescoping (\ref{400}) we can obtain
\begin{align}\label{20}
e_{n+1}^\d &= \prod_{j=0}^n r_{\a_j}(T^* T) e_0
-\sum_{j=0}^n \prod_{k=j+1}^n r_{\a_k}(T^*T) g_{\a_j}(T^*T) T^*(y-y^\d+u_j) \nonumber\\
&\quad\, - \sum_{j=0}^n \prod_{k=j+1}^n r_{\a_k}(T^*T)
\left[g_{\a_j}(T_j^*T_j)T_j^*-g_{\a_j}(T^*T)T^*\right] (F(x_j^\d)-y^\d).
\end{align}
By multiplying  (\ref{20}) by $T:=F'(x^\dag)$ and noting that
\begin{equation}\label{1111}
I-\sum_{j=0}^n \prod_{k=j+1}^n r_{\a_k}(TT^*)
g_{\a_j}(TT^*) TT^*=\prod_{j=0}^n r_{\a_j}(TT^*),
\end{equation}
we can obtain
\begin{align}\label{21}
T &e_{n+1}^\d-y^\d+y \nonumber\\
& =T \prod_{j=0}^n r_{\a_j}(T^*T) e_0
 +\prod_{j=0}^n r_{\a_j}(TT^*)(y-y^\d) - \sum_{j=0}^n \prod_{k=j+1}^n r_{\a_k}(TT^*) g_{\a_j}(TT^*) TT^*u_j \nonumber\\
&\quad\, - \sum_{j=0}^n T \prod_{k=j+1}^n
r_{\a_k}(T^*T) \left[g_{\a_j}(T_j^*T_j)T_j^*-g_{\a_j}(T^*T)T^*\right] (F(x_j^\d)-y^\d).
\end{align}

Based on (\ref{20}) and (\ref{21}) we will prove Theorem \ref{T1} concerning the order optimal
convergence rate of $x_{n_\d}^\d$ to $x^\dag$ when $e_0:=x_0-x^\dag$
satisfies the source condition (\ref{33}) for some $0<\nu\le 1/2$
and $\omega\in {\mathcal N}(F'(x^\dag))^\perp\subset X$.
We will first derive the
crucial estimates on $\|e_n^\d\|$ and $\|Te_n^\d\|$. To this end, we
introduce the integer $\tilde{n}_\d$ satisfying
\begin{equation}\label{7.15.1}
s_{\tilde{n}_\d}^{-\nu-1/2}\le \frac{(\tau-1)\d}{ 2 c_0 \|\omega\|}< s_n^{-\nu-1/2},
\qquad 0\le n<\tilde{n}_\d,
\end{equation}
where $c_0>1$ is the constant appearing in (\ref{60}).
Such $\tilde{n}_\d$ is well-defined since $s_n\rightarrow \infty$
as $n\rightarrow \infty$.

\begin{proposition}\label{P1}
Let $F$ satisfy Assumptions \ref{A1}, let $\{g_\a\}$ satisfy
Assumptions \ref{A4} and \ref{A5}, and let $\{\a_n\}$ be a sequence
of positive numbers satisfying (\ref{60}). If $x_0-x^\dag$ satisfies
(\ref{33}) for some $0<\nu \le 1/2$
and $\omega\in {\mathcal N}(F'(x^\dag))^\perp\subset X$ and if $K_0 \|\omega\|$ is
suitably small, then there exists a generic constant $C_*>0$ such
that
\begin{align}\label{30}
\|e_n^\d\| \le C_*  s_n^{-\nu} \|\omega\| \qquad \mbox{and} \qquad
\|T e_n^\d\| \le C_*  s_n^{-\nu-1/2} \|\omega\|
\end{align}
and
\begin{equation}\label{50}
\|T e_n^\d-y^\d+y\|\le (c_0 +C_* K_0\|\omega\|)
s_n^{-\nu-1/2} \|\omega\| +\d
\end{equation}
for all $0\le n\le \tilde{n}_\d$.
\end{proposition}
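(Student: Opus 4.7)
I would prove (\ref{30}) and (\ref{50}) simultaneously by induction on $n$, using the telescoped identities (\ref{20}) for $e_{n+1}^\d$ and (\ref{21}) for $Te_{n+1}^\d-y^\d+y$. The base case $n=0$ is immediate: the source condition and $\|T\|\le 1$ give $\|e_0\|\le\|\omega\|$ and $\|Te_0\|\le\|\omega\|$, and since $s_0=1/\a_0\ge 1/c_1$, any $C_*\ge c_1^{\nu+1/2}$ makes (\ref{30}) hold at $n=0$, with (\ref{50}) at $n=0$ trivially adding the $\d$. Two consequences of the inductive hypothesis drive the rest. First, the tangential inequality following Assumption \ref{A1} gives $\|u_j\|\le\tfrac12 K_0\|e_j^\d\|\|Te_j^\d\|\le\tfrac12 K_0 C_*^2\|\omega\|^2 s_j^{-2\nu-1/2}$. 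Second, since the induction is carried out for $n+1\le\tilde n_\d$, the definition (\ref{7.15.1}) gives $\d\le\frac{2c_0}{\tau-1}\|\omega\| s_j^{-\nu-1/2}$ for $j\le n$, hence $\|F(x_j^\d)-y^\d\|\le\|Te_j^\d\|+\|u_j\|+\d\lesssim\|\omega\|(1+K_0\|\omega\|) s_j^{-\nu-1/2}$.

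To bound $\|e_{n+1}^\d\|$ via (\ref{20}) I would treat its three pieces separately. The initial-error term is $\prod_{j\le n} r_{\a_j}(T^*T)(T^*T)^\nu\omega$ and (\ref{g1}) directly gives $s_n^{-\nu}\|\omega\|$. For the linear-regularization sum, after the polar factoring $T^*=(T^*T)^{1/2}U$, inequality (\ref{g2}) with exponent $1/2$ bounds each term by $\a_j^{-1}(s_n-s_{j-1})^{-1/2}(\d+\|u_j\|)$; Lemma \ref{L2} with $p=1/2$, combined with the $\d$-conversion from (\ref{7.15.1}), produces an aggregate bound $\lesssim\|\omega\|(1+K_0 C_*^2\|\omega\|) s_n^{-\nu}$. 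The nonlinear-perturbation sum is exactly the object addressed by Lemma \ref{L10} with $a=b=0$, giving per-term control $\lesssim\a_j^{-1}(s_n-s_{j-1})^{-1/2}K_0\|e_j^\d\|\|F(x_j^\d)-y^\d\|$; Lemma \ref{L2} then yields $\lesssim K_0 C_*(1+K_0\|\omega\|)\|\omega\|^2 s_n^{-\nu}$. The logarithmic case at $\nu=1/4$ is absorbed using $s_n\ge 1/c_1$ together with the strict positivity of the remaining exponent.

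The estimate of $\|Te_{n+1}^\d-y^\d+y\|$ from (\ref{21}) is parallel. The initial term becomes $\prod r_{\a_j}(TT^*)(TT^*)^{\nu+1/2}U^*\omega$, so (\ref{g1}) at exponent $\nu+1/2\le 1$ yields $s_n^{-\nu-1/2}\|\omega\|$; the data term contributes a pure $\d$; the linearization sum carries an additional factor $\lambda$, which is absorbed by (\ref{g2}) with exponent $1$ and Lemma \ref{L2} with $p=1$; the nonlinear-perturbation sum is reduced to the previous setting via $\|TA\cdot\|=\|(T^*T)^{1/2}A\cdot\|$ and Lemma \ref{L10} with $a=1/2$, $b=0$, and then summed by Lemma \ref{L2} with $p=1$. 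The sums combine to give $\|Te_{n+1}^\d-y^\d+y\|\le s_n^{-\nu-1/2}\|\omega\|+\d+CK_0 C_*^2\|\omega\|^2 s_n^{-\nu-1/2}$.

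To close the induction I would use $s_{n+1}\le c_0 s_n$ from (\ref{60}) to pass $s_n^{-\nu}$ and $s_n^{-\nu-1/2}$ to the next level at cost of factors $c_0^\nu\le c_0^{1/2}$ and $c_0^{\nu+1/2}\le c_0$ respectively (this is where $\nu\le 1/2$ enters). One then chooses $C_*$ larger than the purely linear constants arising from the initial-error and data parts, and shrinks $K_0\|\omega\|$ so that the nonlinear corrections ($K_0 C_*^2\|\omega\|^2$ and $K_0 C_*\|\omega\|^2$) contribute less than a fraction of $C_*\|\omega\|$, closing both (\ref{30}) and (\ref{50}). The main obstacle is precisely this closure, since the nonlinear bounds scale quadratically in $C_*$ and naively yield a circular constraint; the remedy is the standard ordering: fix $C_*$ first from the linear contributions, then shrink $K_0\|\omega\|$ depending on $C_*$. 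A minor side check is that $x_n^\d\in B_\rho(x^\dag)$ throughout, which follows from $\|e_n^\d\|\le C_* s_0^{-\nu}\|\omega\|\le C_* c_1^{\nu}\|\omega\|$ once $\|\omega\|$ is chosen small enough.
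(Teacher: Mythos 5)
Your proposal follows essentially the same route as the paper's proof: induction on $n$ via the telescoped identities (\ref{20}) and (\ref{21}), with Assumption \ref{A5} controlling the linear terms, Lemma \ref{L10} (with $a=b=0$ for $\|e_{n+1}^\d\|$ and $a=1/2$, $b=0$ for $\|Te_{n+1}^\d-y^\d+y\|$) controlling the operator-perturbation sums, Lemma \ref{L2} performing the summation, and the definition (\ref{7.15.1}) of $\tilde n_\d$ converting $\d$ into $\|\omega\|s_j^{-\nu-1/2}$; your closure of the induction (fix $C_*$ from the linear contributions first, then shrink $K_0\|\omega\|$ depending on $C_*$) is also exactly the paper's argument. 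The one genuine slip is in the base case: from $\|T\|\le 1$ alone you only get $\|e_0\|\le\|\omega\|$, whereas the target is $C_*s_0^{-\nu}\|\omega\|=C_*\a_0^{\nu}\|\omega\|$, and the inequality $\a_0\le c_1$ goes the wrong way here --- if $\a_0$ is small then $s_0^{\nu}=\a_0^{-\nu}$ is large and no fixed choice such as $C_*\ge c_1^{\nu+1/2}$ can work. The paper instead invokes the scaling condition $\|T\|\le\sqrt{\a_0}$ from Assumption \ref{A1}(b), which yields $\|(T^*T)^{\nu}\omega\|\le\a_0^{\nu}\|\omega\|=s_0^{-\nu}\|\omega\|$ and $\|T(T^*T)^{\nu}\omega\|\le\a_0^{\nu+1/2}\|\omega\|=s_0^{-\nu-1/2}\|\omega\|$ directly, so that $C_*\ge 1$ suffices at $n=0$; with this correction the rest of your argument goes through as written.
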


\begin{proof}
We will show (\ref{30}) by induction. By using (\ref{33}) and
$\|T\|\le \sqrt{\a_0}$ it is easy to see that
(\ref{30}) for $n=0$ holds if $C_*\ge 1$.
Next we assume that (\ref{30}) holds
for all $0\le n\le l$ for some $l<\tilde{n}_\d$ and show (\ref{30}) holds for $n=l+1$.

With the help of (\ref{33}) we can derive from (\ref{20}) that
\begin{align*}
\|e_{l+1}^\d\|
&\le \left\|\prod_{j=0}^l r_{\a_j}(T^*T) (T^*T)^\nu \omega\right\| +
\left\|\sum_{j=0}^l   \prod_{k=j+1}^l r_{\a_k} (T^*T)g_{\a_j}(T^*T) T^* (y-y^\d+u_j)\right\| \\
& +\left\| \sum_{j=0}^l \prod_{k=j+1}^l r_{\a_k}(T^*T)
\left[g_{\a_j}(T_j^*T_j)T_j^*-g_{\a_j}(T^*T)T^*\right]
(F(x_j^\d)-y^\d)\right\|.
\end{align*}
Thus we may use Assumption \ref{A5} and Lemma \ref{L10} with $a=b=0$ to conclude
\begin{align}\label{40}
\|e_{l+1}^\d\| &\le s_l^{-\nu}  \|\omega\| + b_2 \sum_{j=0}^l \frac{1}{\a_j}
(s_l-s_{j-1})^{-1/2}(\d+\|u_j\|) \nonumber\\
&\quad\, + C \sum_{j=0}^l \frac{1}{\a_j}
(s_l-s_{j-1})^{-1/2} K_0\|e_j\| \|F(x_j^\d)-y^\d\|.
\end{align}
Moreover, by using (\ref{33}), Assumption \ref{A5} and Lemma
\ref{L10} with $a=1/2$ and $b=0$, we have from (\ref{21}) that
\begin{align}\label{41}
\|T e_{l+1}^\d-y^\d+y\| &\le s_l^{-\nu-1/2} \|\omega\|+\d
+ b_2 \sum_{j=0}^l \frac{1}{\a_j} (s_l-s_{j-1})^{-1} \|u_j\| \nonumber\\
&\quad\, + C \sum_{j=0}^l \frac{1}{\a_j}
(s_l-s_{j-1})^{-1} K_0\|e_j\| \|F(x_j^\d)-y^\d\|.
\end{align}
With the help of Assumption \ref{A1} and the induction hypotheses, it follows for all $0\le
j\le l$ that
\begin{equation}\label{e11}
\|u_j\|\le K_0\|e_j^\d\| \|T e_j^\d\|\lesssim
K_0 \|\omega\|^2 s_j^{-2\nu -1/2}.
\end{equation}
By using the fact
\begin{equation}\label{77}
\d\le \frac{2 c_0}{\tau-1} \|\omega\| s_j^{-\nu-1/2},
\qquad 0\le j\le l
\end{equation}
and the induction hypotheses we have
\begin{equation}\label{e14}
\|F(x_j^\d)-y^\d\|\le \d +\|T e_j^\d\| +\|u_j\| \lesssim \|\omega\|
s_j^{-\nu-1/2}.
\end{equation}
In view of the estimates (\ref{e11}), (\ref{e14}), the induction hypothesis on $\|e_j\|$
and the inequality
\begin{equation}\label{7.29.1}
\sum_{j=0}^l \frac{1}{\a_j} (s_l-s_{j-1})^{-1/2}\le c_2
s_l^{1/2}
\end{equation}
for some generic constant $c_2$, which follows from Lemma \ref{L2},
we have from (\ref{40}) and (\ref{41}) that
\begin{align*}
\|e_{l+1}^\d\| & \le  \|\omega\| s_l^{-\nu}  + c_2 s_l^{1/2} \d
 +CK_0\|\omega\|^2 \sum_{j=0}^l \frac{1}{\a_j}
(s_l-s_{j-1})^{-1/2} s_j^{-2\nu-1/2}
\end{align*}
and
\begin{align*}
\|T e_{l+1}^\d-y^\d+y\| &\le \|\omega\| s_l^{-\nu-1/2} +\d  + CK_0\|\omega\|^2 \sum_{j=0}^l \frac{1}{\a_j}
(s_l-s_{j-1})^{-1} s_j^{-2\nu-1/2}.
\end{align*}
With the help of Lemma \ref{L2}, $\nu>0$, (\ref{77})
and (\ref{60}) we have
\begin{align*}
\|e_{l+1}^\d\| \le \left(1  +\frac{2}{\tau-1} c_0 c_2 +C K_0\|\omega\| \right) \|\omega\| s_l^{-\nu}
\end{align*}
and
\begin{align}\label{80}
\|Te_{l+1}^\d-y^\d+y\| &\le \d+ \left(1 +CK_0\|\omega\|\right)
\|\omega\| s_l^{-\nu-1/2} \nonumber\\
&\le \d+  \left(c_0+CK_0\|\omega\|\right) \|\omega\|
s_{l+1}^{-\nu-1/2}.
\end{align}
Consequently $\|e_{l+1}^\d\| \le C_* \|\omega\| s_{l+1}^{-\nu}$ if $C_*\ge
2+\frac{2}{\tau-1} c_0 c_2$ and $K_0\|\omega\|$ is suitably
small. Moreover, from (\ref{80}), (\ref{77}) and (\ref{60}) we also
have
\begin{align*}
\|T e_{l+1}^\d\| &\le 2\d + \left(c_0+ CK_0\|\omega\|\right)
\|\omega\| s_{l+1}^{-\nu-1/2}\\
&\le  \left(\frac{4 c_0^2}{\tau-1} +c_0 +CK_0\|\omega\|\right)
\|\omega\| s_{l+1}^{-\nu-1/2}\\
&\le C_* \|\omega\| s_{l+1}^{-\nu-1/2}
\end{align*}
if $C_*\ge 2 c_0 +\frac{4 c_0^2}{\tau-1}$ and $K_0\|\omega\|$
is suitably small. We therefore complete the proof of (\ref{30}). In
the meanwhile, (\ref{80}) gives the proof of (\ref{50}).
\end{proof}

From Proposition \ref{P1} it follows that $x_n\in B_\rho(x^\dag)$ for
$0\le n\le \tilde{n}_\d$ if $\|\omega\|$ is sufficiently small.
Furthermore, from (\ref{e11}) and (\ref{e14}) we have
\begin{equation}\label{66}
\|F(x_n^\d)-y-T e_n^\d\|\lesssim K_0\|\omega\|^2 s_n^{-2\nu-1/2}
\end{equation}
and
\begin{equation}\label{67}
\|F(x_n^\d)-y^\d\|\lesssim  \|\omega\| s_n^{-\nu-1/2}
\end{equation}
for $0\le n\le \tilde{n}_\d$.

In the following we will show that $n_\d\le \tilde{n}_\d$ for the
integer $n_\d$ defined by (\ref{DP}) with $\tau>1$.
Consequently, the method given by (\ref{2}) and (\ref{DP}) is well-defined.

\begin{lemma}\label{L44}
Let all the conditions in Proposition \ref{P1} hold. Let $\tau>1$ be a given number.
If $x_0-x^\dag$ satisfies (\ref{33}) for some $0<\nu\le 1/2$
and $\omega\in {\mathcal N}(F'(x^\dag))^\perp\subset X$ and if $K_0\|\omega\|$
is suitably small, then the discrepancy principle (\ref{DP})
defines a finite integer $n_\d$ satisfying $n_\d\le \tilde{n}_\d$.
\end{lemma}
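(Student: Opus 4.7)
The plan is to show that $\|F(x_{\tilde n_\d}^\d)-y^\d\|\le \tau\d$, from which the discrepancy principle must trigger at some step $n_\d\le\tilde n_\d$.

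First I would apply the triangle inequality:
$$
\|F(x_{\tilde n_\d}^\d)-y^\d\|\le \|F(x_{\tilde n_\d}^\d)-y-T e_{\tilde n_\d}^\d\|+\|T e_{\tilde n_\d}^\d-y^\d+y\|.
$$
The second term is controlled directly by Proposition \ref{P1}, estimate (\ref{50}):
$$
\|T e_{\tilde n_\d}^\d-y^\d+y\|\le (c_0+C_*K_0\|\omega\|)\|\omega\|\,s_{\tilde n_\d}^{-\nu-1/2}+\d,
$$
while the first term is controlled by (\ref{66}):
$$
\|F(x_{\tilde n_\d}^\d)-y-T e_{\tilde n_\d}^\d\|\lesssim K_0\|\omega\|^2 s_{\tilde n_\d}^{-2\nu-1/2}.
$$

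Next I would use the defining inequality (\ref{7.15.1}) for $\tilde n_\d$, namely $\|\omega\|s_{\tilde n_\d}^{-\nu-1/2}\le \frac{(\tau-1)}{2c_0}\d$. Substituted into the bound on the linearized residual, this yields
$$
\|T e_{\tilde n_\d}^\d-y^\d+y\|\le \frac{\tau-1}{2}\d+ C K_0\|\omega\|\,\d+\d=\frac{\tau+1}{2}\d+C K_0\|\omega\|\,\d.
$$
For the nonlinear term I would factor $s_{\tilde n_\d}^{-2\nu-1/2}=s_{\tilde n_\d}^{-\nu-1/2}\cdot s_{\tilde n_\d}^{-\nu}$ and use the monotonicity $s_{\tilde n_\d}\ge s_0=1/\a_0$ (so $s_{\tilde n_\d}^{-\nu}$ is bounded by a constant), to get
$$
K_0\|\omega\|^2 s_{\tilde n_\d}^{-2\nu-1/2}\lesssim K_0\|\omega\|\bigl(\|\omega\|s_{\tilde n_\d}^{-\nu-1/2}\bigr)\lesssim K_0\|\omega\|\,\d.
$$

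Combining these,
$$
\|F(x_{\tilde n_\d}^\d)-y^\d\|\le \frac{\tau+1}{2}\d+C K_0\|\omega\|\,\d,
$$
and choosing $K_0\|\omega\|$ small enough that $C K_0\|\omega\|\le (\tau-1)/2$ gives $\|F(x_{\tilde n_\d}^\d)-y^\d\|\le \tau\d$. By the definition of $n_\d$ in (\ref{DP}), this forces $n_\d\le \tilde n_\d<\infty$. The only mild subtlety is matching the smallness condition on $K_0\|\omega\|$ with the one already required by Proposition \ref{P1}; both are finitely many constraints, so they can be satisfied simultaneously.
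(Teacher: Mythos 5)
Your proposal is correct and follows essentially the same route as the paper: the same triangle-inequality split into the linearization error (bounded via (\ref{66})) and the linearized residual (bounded via (\ref{50})), the same use of the defining inequality (\ref{7.15.1}) for $\tilde n_\d$, and the same absorption of the quadratic term using $\nu>0$ before shrinking $K_0\|\omega\|$. No gaps.
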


\begin{proof}
From Proposition \ref{P1}, (\ref{66}) and $\nu> 0$
it follows for $0\le n\le \tilde{n}_\d$ that
\begin{align*}
\|F(x_n^\d)-y^\d\| & \le \|F(x_n^\d)-y-T e_n^\d\|+\|T e_n^\d -y^\d+y\|\\
&\le C K_0\|\omega\|^2 s_n^{-2\nu-1/2}
+\left(c_0 +CK_0\|\omega\|\right) s_n^{-\nu-1/2} \|\omega\|+\d\\
&\le \left(c_0 +CK_0\|\omega\|\right) s_n^{-\nu-1/2} \|\omega\| +\d.
\end{align*}
By setting $n=\tilde{n}_\d$ in the above inequality and using the definition of $\tilde{n}_\d$ we obtain
\begin{align*}
\|F(x_{\tilde{n}_\d}^\d)-y^\d\|\le \left(1+\frac{\tau-1}{2}+ CK_0\|\omega\|\right) \d\le \tau \d
\end{align*}
if $K_0\|\omega\|$ is suitably small. According to the
definition of $n_\d$ we have $n_\d\le \tilde{n}_\d$.
\end{proof}

\subsection{Completion of the proof of Theorem \ref{T1}}

From (\ref{20}), the source condition (\ref{33}), the polar decomposition on $T^*$, and Lemma \ref{L10}
with $a=0$ and $b=\nu$ it follows that
\begin{align}\label{3.47}
e_{n+1}^\d& =(T^*T)^\nu w_n,
\end{align}
where
\begin{align*}
w_n&:=\prod_{j=0}^n r_{\a_j}(T^*T) \omega -\sum_{j=0}^n S_j (F(x_j^\d)-y^\d)\\
&\quad\, -\sum_{j=0}^n \prod_{k=j+1}^n
r_{\a_k}(T^*T) g_{\a_j}(T^*T) (T^*T)^{1/2-\nu} U (y-y^\d+u_j).
\end{align*}
With the help of Assumption \ref{A4} and Lemma \ref{L10} we have
\begin{align*}
\|w_n\| &\lesssim  \|\omega\|
+ \sum_{j=0}^n \frac{1}{\a_j} (s_n-s_{j-1})^{-1/2+\nu} K_0\|e_j\|\|F(x_j^\d)-y^\d\|\\
&\quad\, + \sum_{j=0}^n \frac{1}{\a_j} (s_n-s_{j-1})^{-1/2+\nu} \left(\d+\|u_j\|\right).
\end{align*}
In view of (\ref{66}), (\ref{67}), Proposition \ref{P1}, Lemma \ref{L2} and (\ref{7.15.1})
it yields for $0\le n<\tilde{n}_\d$ that
\begin{align*}
\|w_n\| &\lesssim \|\omega\|+ \d \sum_{j=0}^n \frac{1}{\a_j} (s_n-s_{j-1})^{-1/2+\nu} \\
&\quad\, +K_0\|\omega\|^2 \sum_{j=0}^n \frac{1}{\a_j} (s_n-s_{j-1})^{-1/2+\nu} s_j^{-2\nu-1/2}\\
&\lesssim \|\omega\| + s_n^{1/2+\nu} \d\lesssim \|\omega\|.
\end{align*}
Since Lemma \ref{L44} implies that $n_\d\le \tilde{n}_\d$, we have $\|w_{n_\d-1}\|\lesssim \|\omega\|$.
On the other hand, it follows from (\ref{3.47}), Assumption \ref{A1} and the definition of $n_\d$ that
\begin{align*}
\|T (T^*T)^\nu w_{n_\d-1}\|=\|T e_{n_\d}\|\lesssim
\|F(x_{n_\d}^\d)-y\| \lesssim \d.
\end{align*}
Therefore, by using (\ref{3.47}) and the above two estimates, we
have from the interpolation inequality that
\begin{align*}
\|e_{n_\d}^\d\|&\le \|w_{n_\d-1}\|^{1/(1+2\nu)}
\|T (T^*T)^\nu w_{n_\d-1}\|^{2\nu/(1+2\nu)} \\
&\le C_\nu \|\omega\|^{1/(1+2\nu)} \d^{2\nu/(1+2\nu)}.
\end{align*}
This gives the desired estimate.

\section{\bf Convergence: proof of Theorem \ref{T2}} In this section we will show Theorem \ref{T2} concerning the convergence
of $x_{n_\d}^\d$ to $x^\dag$ as $\d\rightarrow 0$ without assuming any source conditions on $e_0:=x_0-x^\dag$.
The sequence $\{\a_n\}$ is now given by (\ref{7.19.1}). It is easy to see that
$1/\a_n\le s_n \le 1/((1-r) \a_n)$ and
\begin{equation}\label{7.19.2}
\sum_{j=0}^n \frac{1}{\a_j} (s_n-s_{j-1})^{-1} s_j^{-\mu}\le C_1 s_n^{-\mu}
\end{equation}
for $0\le \mu  <1$, where $C_1$ is a constant depending only on $r$ and $\mu$. We remark that (\ref{7.19.2})
may not be true for a general sequence
$\{\a_n\}$ satisfying (\ref{60}).

We first show that the method given by (\ref{2}) and (\ref{DP}) is well-defined. To this end, we introduce
the integer $\hat{n}_\d$ satisfying
\begin{equation}\label{7.20.5}
s_{\hat{n}_\d}^{-1/2} \le \frac{(\tau-1) \d}{2 c_0 \|e_0\|} < s_n^{-1/2}, \qquad 0\le n<\hat{n}_\d.
\end{equation}
Since $s_n\rightarrow \infty$ as $n\rightarrow \infty$, such $\hat{n}_\d$ is well-defined.

\begin{lemma}\label{L7.20.1}
Let $F$ satisfy Assumptions \ref{A1}, let $\{g_\a\}$
satisfy Assumptions \ref{A4} and \ref{A5},  and let $\{\a_n\}$ be
a sequence of positive numbers satisfying (\ref{7.19.1}). Let $\tau>1$ be a given number.
If $K_0\|e_0\|$ is suitably small, then
there is a constant $C_*$ such that
\begin{equation}\label{7.20.4}
\|e_n^\d\|\le C_* \|e_0\| \quad \mbox{and} \quad \|T e_n^\d\|\le C_* \|e_0\| s_n^{-1/2}
\end{equation}
for $0\le n\le \hat{n}_\d$, and the discrepancy principle (\ref{DP}) defines a finite integer $n_\d$ satisfying
$n_\d\le \hat{n}_\d$.
\end{lemma}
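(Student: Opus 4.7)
The plan is to prove (\ref{7.20.4}) by simultaneous induction on both estimates for $0\le n\le \hat{n}_\d$, following the structure of Proposition \ref{P1} but specialized to source-condition exponent $\nu=0$. The base case $n=0$ is immediate, since $\|T\|\le \sqrt{\a_0}$ together with $s_0^{-1/2}=\sqrt{\a_0}$ gives both estimates provided $C_*\ge 1$. For the inductive step, assume (\ref{7.20.4}) holds for $0\le n\le l$ with $l<\hat{n}_\d$; I would bound $\|e_{l+1}^\d\|$ and $\|Te_{l+1}^\d\|$ using the telescoping identities (\ref{20}) and (\ref{21}).

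The leading terms are controlled by Assumption \ref{A5}: the bound $\|\prod_{j=0}^l r_{\a_j}(T^*T)e_0\|\le \|e_0\|$ follows from (\ref{g1}) with $\nu=0$, while $\|T\prod_{j=0}^l r_{\a_j}(T^*T)e_0\|\le s_l^{-1/2}\|e_0\|$ follows from (\ref{g1}) with $\nu=1/2$ together with the polar decomposition $T=U(T^*T)^{1/2}$. The data-error term in (\ref{21}) is absorbed by $\d$ via the filter boundedness on $TT^*$. The remaining nonlinear correction terms are treated using Lemma \ref{L10} with $(a,b)=(0,0)$ for $\|e_{l+1}^\d\|$ and $(a,b)=(1/2,0)$ for $\|Te_{l+1}^\d\|$. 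Substituting the induction hypotheses produces $\|u_j\|\lesssim K_0\|e_0\|^2 s_j^{-1/2}$ and, using $\d\lesssim \|e_0\| s_j^{-1/2}$ for $j<\hat{n}_\d$ from (\ref{7.20.5}), the residual bound $\|F(x_j^\d)-y^\d\|\lesssim \|e_0\| s_j^{-1/2}$.

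The $\|e_{l+1}^\d\|$ estimate then reduces to controlling $\sum_{j=0}^l \a_j^{-1}(s_l-s_{j-1})^{-1/2} s_j^{-1/2}$, which is $O(1)$ by Lemma \ref{L2} with $p=q=1/2$. The principal obstacle is the $\|Te_{l+1}^\d\|$ estimate, which reduces to $\sum_{j=0}^l \a_j^{-1}(s_l-s_{j-1})^{-1} s_j^{-1/2}$: a direct application of Lemma \ref{L2} would yield an unwanted logarithmic factor $\log(1+s_l)$, which would prevent closing the induction. This is precisely where the geometric assumption (\ref{7.19.1}) intervenes, since it delivers the refined summation bound (\ref{7.19.2}) with $\mu=1/2$, producing the clean $O(s_l^{-1/2})$ estimate. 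Choosing $C_*$ large and $K_0\|e_0\|$ sufficiently small then closes both induction steps simultaneously.

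Finally, for the discrepancy-principle assertion, the inductive argument also yields the residual estimate $\|F(x_n^\d)-y^\d\|\le (c_0+CK_0\|e_0\|)\|e_0\| s_n^{-1/2}+\d$ for all $n\le \hat{n}_\d$, obtained by combining (\ref{7.20.4}) with the tangential cone estimate $\|F(x_n^\d)-y-Te_n^\d\|\lesssim K_0\|e_0\|^2 s_n^{-1/2}$. Setting $n=\hat{n}_\d$ and invoking the defining inequality of $\hat{n}_\d$ gives $\|F(x_{\hat{n}_\d}^\d)-y^\d\|\le (1+(\tau-1)/2+CK_0\|e_0\|)\d\le \tau\d$ whenever $K_0\|e_0\|$ is small enough, whence the discrepancy principle necessarily terminates at some $n_\d\le \hat{n}_\d$.
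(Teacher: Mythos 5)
Your proposal is correct and follows essentially the same route as the paper: induction on both bounds via the telescoping identities, Lemma \ref{L10} with $(a,b)=(0,0)$ and $(1/2,0)$, Lemma \ref{L2} for the $(s_l-s_{j-1})^{-1/2}s_j^{-1/2}$ sum, the refined bound (\ref{7.19.2}) with $\mu=1/2$ to avoid the logarithmic loss in the $\|Te_{l+1}^\d\|$ estimate, and the Lemma \ref{L44}-style evaluation at $n=\hat{n}_\d$ for the discrepancy principle. The paper states the inductive step only as "by a similar argument in the proof of Proposition \ref{P1}"; you have correctly supplied exactly those details, including the key observation of where the geometric decay of $\{\a_n\}$ is genuinely needed.
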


\begin{proof} We prove (\ref{7.20.4}) by induction. By using $\|T\|\le \sqrt{\a_0}$, it is easy to see
that (\ref{7.16.0}) is true for $n=0$ if $C_*\ge 1$. Next we assume that (\ref{7.16.0}) holds for
all $0\le n\le l$ for some $l<\hat{n}_\d$, and show that
it is also true for $n=l+1$. By a similar argument in the proof of Proposition \ref{P1} we obtain
\begin{align}
\|e_{l+1}^\d\| &\le \|e_0\|+c_2 s_l^{1/2} \d +CK_0\|e_0\|^2 \sum_{j=0}^l \frac{1}{\a_j} (s_l-s_{j-1})^{-1/2} s_j^{-1/2} \label{7.19.3}
\end{align}
and
\begin{align}
\|T e_{l+1}^\d-y^\d+y\| &\le \|e_0\| s_l^{-1/2}+\d +CK_0\|e_0\|^2 \sum_{j=0}^l \frac{1}{\a_j} (s_l-s_{j-1})^{-1} s_j^{-1/2}.\label{7.19.4}
\end{align}
By using (\ref{7.20.5}) and Lemma \ref{L2} we obtain from (\ref{7.19.3}) that
$$
\|e_{l+1}^\d\|\le \left(1+ \frac{2}{\tau-1} c_0c_2 +CK_0\|e_0\|\right) \|e_0\| \le C_* \|e_0\|
$$
if $C_*\ge 2+\frac{2}{\tau-2} c_0c_2$ and $K_0\|e_0\|$ is suitably small.
On the other hand, by using (\ref{7.19.2}) with $\mu=1/2$ and (\ref{60}) we obtain from
(\ref{7.19.4}) that
\begin{align}
\|T e_{l+1}^\d-y^\d+y\| &\le \d+ (1+CK_0\|e_0\|) \|e_0\| s_l^{-1/2} \nonumber\\
&\le \d + (c_0+ CK_0\|e_0\|) \|e_0\| s_{l+1}^{-1/2}.
\end{align}
Consequently, we have from (\ref{7.20.5}) that
$$
\|T e_{l+1}^\d\|\le \left(\frac{4 c_0^2}{\tau-1} +c_0 +CK_0\|e_0\|\right) \|e_0\| s_{l+1}^{-1/2} \le C_* \|e_0\| s_{l+1}^{-1/2}
$$
if $C_*\ge 2 c_0 +\frac{4 c_0^2}{\tau-1}$ and $K_0\|e_0\|$ is suitably small. We thus complete the proof of (\ref{7.20.4}).

Note that the above argument in fact shows also that
$$
\|T e_n^\d-y^\d+y\|\le \d +(c_0+CK_0\|e_0\|) \|e_0\| s_n^{-1/2}, \qquad 0\le n\le \hat{n}_\d.
$$
Thus, by the similar argument in the proof of Lemma \ref{L44} we can derive $\|F(x_{\hat{n}_\d}^\d)-y^\d\|\le \tau \d$
if $K_0\|e_0\|$ is suitably small. According to the definition of $n_\d$ we obtain $n_\d\le \hat{n}_\d$.
\end{proof}

In the remaining part of this section we will show $x_{n_\d}^\d\rightarrow x^\dag$ as
$\d\rightarrow 0$. We will achieve this by first considering the noise free iterative
sequence $\{x_n\}$ defined by (\ref{2}) with $y^\d$ replaced by $y$, i.e.
\begin{equation}\label{7.15.2}
x_{n+1}=x_n-g_{\a_n}\left(F'(x_n)^*F'(x_n)\right) F'(x_n)^* (F(x_n)-y)
\end{equation}
and showing that $x_n\rightarrow x^\dag$ as $n\rightarrow \infty$. We then derive the
stability estimate on $\|x_n^\d-x_n\|$ for $0\le n\le n_\d$ together with other related estimates.
With the help of the definition of $n_\d$, we will be able to show the convergence of $x_{n_\d}^\d$ to $x^\dag$
as $\d\rightarrow 0$.

\subsection{Convergence of the noise free iteration}

In this subsection we will show the convergence of $x_n$ to $x^\dag$ as $n\rightarrow \infty$.
We first show that if $x_0-x^\dag\in {\mathcal R}(F'(x^\dag)^*)$
then $x_n\rightarrow x^\dag$ as $n\rightarrow \infty$. We then perturb the initial guess $x_0$ to be $\hat{x}_0$
such that $\hat{x}_0-x^\dag\in {\mathcal R}(F'(x^\dag)^*)$ and define $\{\hat{x}_n\}$ by
\begin{equation}\label{7.15.3}
\hat{x}_{n+1}=\hat{x}_n -g_{\a_n}\left(F'(\hat{x}_n)^* F'(\hat{x}_n)\right) F'(\hat{x}_n)^* (F(\hat{x}_n)-y).
\end{equation}
Since $x_0-x^\dag\in {\mathcal N}(F'(x^\dag))^\perp=\overline{{\mathcal R}(F'(x^\dag)^*)}$, such $\hat{x}_0$ can be chosen as
close to $x_0$ as we want. We then show that $\{x_n\}$ is stable relative to the change of $x_0$.
This allows us to derive the convergence of $\{x_n\}$.

We start with several lemmas. We first show that $x_n$ is
well-defined for all $n$ and satisfies certain estimates.

\begin{lemma}\label{L7.16.0}
Let $F$ satisfy Assumptions \ref{A1}, let $\{g_\a\}$
satisfy Assumptions \ref{A4} and \ref{A5},  and let $\{\a_n\}$ be
a sequence of positive numbers satisfying (\ref{7.19.1}). If $K_0\|e_0\|$ is suitably small, then
\begin{equation}\label{7.16.0}
\|e_n\|\le 2\|e_0\| \quad \mbox{and} \quad \|T e_n\|\le 2c_0 \|e_0\| s_n^{-1/2}
\end{equation}
for $n=0,1,\cdots$, where $e_n:=x_n-x^\dag$.
\end{lemma}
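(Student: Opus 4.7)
The plan is to prove Lemma \ref{L7.16.0} by induction on $n$, mirroring the argument used in Proposition \ref{P1} and Lemma \ref{L7.20.1}, but with $\delta=0$ and without the source condition. The base case $n=0$ is immediate: the first inequality is trivial, and $\|T e_0\|\le \|T\|\|e_0\|\le \sqrt{\alpha_0}\|e_0\|=\|e_0\| s_0^{-1/2}\le 2 c_0\|e_0\| s_0^{-1/2}$ by Assumption \ref{A1}(b).

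For the inductive step, assume (\ref{7.16.0}) holds for all $0\le j\le l$. Since $\{x_n\}$ satisfies the noise-free analog of (\ref{2}), the telescoping identities (\ref{20}) and (\ref{21}) hold with $y^\delta$ replaced by $y$ (so the $y-y^\d$ terms vanish). Applying Assumption \ref{A5} with $\nu=0$ and Lemma \ref{L10} with $a=b=0$ to the resulting expression for $e_{l+1}$, and with $a=1/2, b=0$ to $Te_{l+1}$, one obtains
\begin{align*}
\|e_{l+1}\|&\le \|e_0\|+b_2\sum_{j=0}^l \frac{1}{\a_j}(s_l-s_{j-1})^{-1/2}\|u_j\|+C\sum_{j=0}^l \frac{1}{\a_j}(s_l-s_{j-1})^{-1/2}K_0\|e_j\|\|F(x_j)-y\|,\\
\|T e_{l+1}\|&\le \|e_0\| s_l^{-1/2}+b_2\sum_{j=0}^l \frac{1}{\a_j}(s_l-s_{j-1})^{-1}\|u_j\|+C\sum_{j=0}^l \frac{1}{\a_j}(s_l-s_{j-1})^{-1}K_0\|e_j\|\|F(x_j)-y\|,
\end{align*}
where $u_j:=F(x_j)-y-Te_j$.

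From the inductive hypothesis together with Assumption \ref{A1}, one gets $\|u_j\|\le K_0\|e_j\|\|Te_j\|\lesssim K_0\|e_0\|^2 s_j^{-1/2}$ and $\|F(x_j)-y\|\le \|Te_j\|+\|u_j\|\lesssim \|e_0\| s_j^{-1/2}$. Plugging these bounds in and using Lemma \ref{L2} with $p=q=1/2$ for the first sum (giving a bound of order $1$, since $\max\{p,q\}<1$) and (\ref{7.19.2}) with $\mu=1/2$ for the second sum (giving a bound of order $s_l^{-1/2}$), I would conclude
\begin{align*}
\|e_{l+1}\|&\le \|e_0\|+CK_0\|e_0\|^2\le 2\|e_0\|,\\
\|Te_{l+1}\|&\le (1+CK_0\|e_0\|)\|e_0\| s_l^{-1/2}\le (c_0+CK_0\|e_0\|)\|e_0\| s_{l+1}^{-1/2}\le 2c_0\|e_0\| s_{l+1}^{-1/2},
\end{align*}
where in the $Te_{l+1}$ bound I have used $s_l^{-1/2}\le c_0 s_{l+1}^{-1/2}$ (a consequence of $s_{l+1}\le c_0 s_n$ in (\ref{60})). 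Both conclusions require $K_0\|e_0\|$ to be chosen sufficiently small, which is exactly the hypothesis of the lemma.

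The argument is completely analogous to the noisy case handled in Lemma \ref{L7.20.1}, and no real obstacle is expected; the only point requiring care is to confirm that one can use the telescoping identities (\ref{20})--(\ref{21}) (which were derived in the noisy case) with $y^\d$ replaced by $y$, and that the relevant sums involving $s_j^{-1/2}$ are absorbed thanks to (\ref{7.19.1}) (which guarantees the summability encoded in (\ref{7.19.2})). Note in particular that the special structure of a geometric sequence is crucial here: for a generic sequence satisfying only (\ref{60}), the sum $\sum_j \frac{1}{\a_j}(s_l-s_{j-1})^{-1} s_j^{-1/2}$ need not decay like $s_l^{-1/2}$, which is the technical reason Theorem \ref{T2} is presently restricted to the geometric case.
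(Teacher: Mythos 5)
Your proof is correct and follows exactly the route the paper intends: the paper's own "proof" of Lemma \ref{L7.16.0} is the one-line remark that it follows by the same argument as Lemma \ref{L7.20.1} (which in turn mirrors Proposition \ref{P1}), and your induction -- telescoping identities with $y^\d$ replaced by $y$, Lemma \ref{L10} with $(a,b)=(0,0)$ and $(1/2,0)$, Lemma \ref{L2} for the $(s_l-s_{j-1})^{-1/2}s_j^{-1/2}$ sum and (\ref{7.19.2}) with $\mu=1/2$ for the $(s_l-s_{j-1})^{-1}s_j^{-1/2}$ sum -- is precisely that argument written out. Your closing observation about why the geometric structure of $\{\a_n\}$ is needed (namely for (\ref{7.19.2})) also matches the paper's own remark.
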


\begin{proof} This result can be obtained by the same argument in the proof of Lemma \ref{L7.20.1}.
\end{proof}

\begin{lemma}\label{L7.19.2}
Let $F$ satisfy Assumptions \ref{A1}, let $\{g_\a\}$
satisfy Assumptions \ref{A4} and \ref{A5},  and let $\{\a_n\}$ be
a sequence of positive numbers satisfying (\ref{7.19.1}).
If $e_0=(T^*T)^{1/4} \omega$
for some $\omega\in {\mathcal N}(T)^\perp\subset X$ and $K_0\|e_0\|$ is suitably small, then
\begin{equation}\label{7.19.6}
\|e_n\|\le 2c_0 \|\omega\| s_n^{-1/4} \quad \mbox{and}
\quad \|T e_n\|\le 2 c_0 \|\omega\| s_n^{-3/4}
\end{equation}
for $n=0,1,\cdots$.
\end{lemma}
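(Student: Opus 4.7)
The plan is to proceed by induction on $n$, following the template of the proof of Proposition~\ref{P1} with $\nu=1/4$ and specialized to the noise-free setting ($\d=0$, $y^\d=y$, and $x_n$ in place of $x_n^\d$). The base case $n=0$ is immediate: the source condition $e_0=(T^*T)^{1/4}\omega$ together with $\|T\|\le\sqrt{\a_0}$ and $s_0=1/\a_0$ yields $\|e_0\|\le \|T\|^{1/2}\|\omega\|\le s_0^{-1/4}\|\omega\|$ and $\|Te_0\|\le \|T\|^{3/2}\|\omega\|\le s_0^{-3/4}\|\omega\|$, both well inside the claimed bound since $2c_0>1$.

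For the inductive step, assume (\ref{7.19.6}) for all indices $\le l$. Starting from the noise-free versions of the identities (\ref{20}) and (\ref{21}), Assumption~\ref{A5} bounds the leading contribution $\prod_{j=0}^l r_{\a_j}(T^*T)(T^*T)^{1/4}\omega$ by $s_l^{-1/4}\|\omega\|$, and by $s_l^{-3/4}\|\omega\|$ after multiplication by $T$; Lemma~\ref{L10} with $(a,b)=(0,0)$ and $(a,b)=(1/2,0)$ respectively controls the corrections coming from $g_{\a_j}(T_j^*T_j)T_j^*-g_{\a_j}(T^*T)T^*$. The induction hypothesis yields $\|u_j\|\le \tfrac12 K_0\|e_j\|\|Te_j\|\lesssim K_0\|\omega\|^2 s_j^{-1}$ and $\|F(x_j)-y\|\le \|Te_j\|+\|u_j\|\lesssim \|\omega\|s_j^{-3/4}$, so the entire nonlinear contribution reduces to bounding
\[
K_0\|\omega\|^2\sum_{j=0}^l \frac{1}{\a_j}(s_l-s_{j-1})^{-p}s_j^{-1}
\]
for $p=1/2$ (the $\|e_{l+1}\|$ bound) and $p=1$ (the $\|Te_{l+1}\|$ bound).

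These sums sit exactly at the borderline exponent of Lemma~\ref{L2}, evaluating to $s_l^{-1/2}\log(1+s_l)$ and $s_l^{-1}\log(1+s_l)$ up to constants. The elementary inequality $\log(1+s)\le C s^{1/4}$ for $s\ge 0$ dominates them by $s_l^{-1/4}$ and $s_l^{-3/4}$ respectively; alternatively, writing $s_j^{-1}\le s_0^{-1/4} s_j^{-3/4}$ and invoking (\ref{7.19.2}) at $\mu=3/4<1$ gives the same bound without logarithms. Assembling, one obtains $\|e_{l+1}\|\le (1+CK_0\|\omega\|)\|\omega\|s_l^{-1/4}$ and $\|Te_{l+1}\|\le (1+CK_0\|\omega\|)\|\omega\|s_l^{-3/4}$; the growth condition $s_{l+1}\le c_0 s_l$ in (\ref{60}) then converts $s_l^{-\beta}$ into $c_0^\beta s_{l+1}^{-\beta}$, and a suitably small choice of $K_0\|e_0\|$ absorbs the $CK_0\|\omega\|$ term, bringing both prefactors to at most $2c_0$ and closing the induction.

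The main technical obstacle is precisely the borderline $\nu=1/4$: the exponent $2\nu+1/2=1$ falls exactly on the logarithmic case of Lemma~\ref{L2}, so the argument must lean either on the elementary bound $\log(1+s)\lesssim s^{1/4}$ or on the geometric refinement (\ref{7.19.2}), which is available here precisely because $\{\a_n\}$ is geometric. Aside from this wrinkle, the proof is essentially a direct transcription of the argument for Proposition~\ref{P1}.
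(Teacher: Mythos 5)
Your induction scheme, base case, and use of the identities behind (\ref{20})--(\ref{21}), Lemma \ref{L10} and Lemma \ref{L2} all match the paper's proof in outline, and your treatment of the borderline sums (either $\log(1+s)\lesssim s^{1/4}$ or the geometric refinement (\ref{7.19.2})) is fine as far as the exponents go. But there is a genuine gap at the absorption step. By inserting the induction hypothesis into \emph{both} factors of the quadratic term you arrive at error contributions of size $K_0\|\omega\|^2 s_j^{-1}$, hence final prefactors of the form $1+CK_0\|\omega\|$, and you then claim that "a suitably small choice of $K_0\|e_0\|$ absorbs the $CK_0\|\omega\|$ term." That does not follow: $e_0=(T^*T)^{1/4}\omega$ only gives $\|e_0\|\le\|T\|^{1/2}\|\omega\|$, so $\|\omega\|$ can be arbitrarily large compared to $\|e_0\|$, and smallness of $K_0\|e_0\|$ says nothing about $K_0\|\omega\|$. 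This is not a pedantic point — it is exactly the feature the paper's remark after the lemma highlights: the lemma must close under smallness of $K_0\|e_0\|$ alone, because in the proof of Theorem \ref{T3} the perturbed initial error $\hat e_0\in{\mathcal R}(T^*)$ has a representer $\omega$ whose norm is completely uncontrolled. A proof that needs $K_0\|\omega\|$ small would make the lemma unusable for its intended purpose.

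The paper's fix, which you should adopt, is to estimate the two factors asymmetrically: bound $\|e_j\|\le 2\|e_0\|$ using Lemma \ref{L7.16.0} (which requires only $K_0\|e_0\|$ small) and reserve the induction hypothesis for $\|Te_j\|\le 2c_0\|\omega\|s_j^{-3/4}$. This yields $\|F(x_j)-y-Te_j\|\lesssim K_0\|e_0\|\,\|\omega\|\,s_j^{-3/4}$ and $\|F(x_j)-y\|\lesssim\|\omega\|s_j^{-3/4}$, so the prefactor that must be absorbed is $CK_0\|e_0\|$, as required. As a bonus, the resulting sums have $q=3/4<1$, so the $p=1/2$ case falls in the non-logarithmic regime of Lemma \ref{L2} and only the $p=1$ case needs (\ref{7.19.2}) with $\mu=3/4$; the borderline $\max\{p,q\}=1$ situation you worked to circumvent never arises.
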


\begin{proof}
We prove (\ref{7.19.6}) by induction. By using $\|T\|\le \sqrt{\a_0}$ and
$e_0=(T^*T)^{1/4} \omega$, it is easy to see that (\ref{7.19.6}) is true for $n=0$. Next
we assume that (\ref{7.19.6}) holds for all $0\le n\le l$, and show that it also holds for $n=l+1$.
By a similar argument in the proof of Proposition \ref{P1} we obtain
\begin{align}\label{7.20.1}
\|e_{l+1}\| &\le s_l^{-1/4} \|\omega\| + b_2 \sum_{j=0}^l \frac{1}{\a_j} (s_l-s_{j-1})^{-1/2}
\|F(x_j)-y-T e_j\| \nonumber\\
&\quad \, + C \sum_{j=0}^l \frac{1}{\a_j} (s_l-s_{j-1})^{-1/2} K_0\|e_j\| \|F(x_j)-y\|
\end{align}
and
\begin{align}\label{7.20.2}
\|T e_{l+1}\| &\le s_l^{-3/4} \|\omega\| + b_2 \sum_{j=0}^l \frac{1}{\a_j} (s_l-s_{j-1})^{-1}
\|F(x_j)-y-T e_j\| \nonumber\\
&\quad \, + C \sum_{j=0}^l \frac{1}{\a_j} (s_l-s_{j-1})^{-1} K_0\|e_j\| \|F(x_j)-y\|.
\end{align}
With the help of Assumption \ref{A1}, Lemma \ref{L7.16.0} and the induction hypotheses, we have
for $0\le j\le l$ that
\begin{align*}
&\|F(x_j)-y-T e_j\|\le K_0\|e_j\|\|T e_j\|\le CK_0\|e_0\| \|\omega\| s_j^{-3/4},\\
&\|F(x_j)-y\|\le \|T e_j\| +\|F(x_j)-y -T e_j\|\lesssim  \|\omega\| s_j^{-3/4}.
\end{align*}
Therefore, by using Lemma \ref{L2}, we obtain from (\ref{7.20.1}) that
\begin{align*}
\|e_{l+1}\| &\le s_l^{-1/4} \|\omega\| +C K_0\|e_0\| \|\omega\| \sum_{j=0}^l \frac{1}{\a_j} (s_l-s_{j-1})^{-1/2} s_j^{-3/4}\\
&\le \left(1+CK_0\|e_0\|\right) \|\omega\| s_l^{-1/4},
\end{align*}
while by using (\ref{7.19.2}) with $\mu=3/4$ we obtain
\begin{align*}
\|T e_{l+1}\| &\le s_l^{-3/4} \|\omega\| + C K_0\|e_0\| \|\omega\| \sum_{j=0}^l \frac{1}{\a_j} (s_l-s_{j-1})^{-1} s_j^{-3/4}\\
&\le \left(1+ C K_0\|e_0\|\right) \|\omega\| s_l^{-3/4}.
\end{align*}
Thus, by using $s_{l+1}\le c_0 s_l$, we obtain for suitably small $K_0\|e_0\|$ that $\|e_{l+1}\|\le 2 c_0 \|\omega\| s_{l+1}^{-1/4}$
and $\|T e_{l+1}\|\le 2 c_0 \|\omega\| s_{l+1}^{-3/4}$. The proof is therefore complete.
\end{proof}

We remark that the crucial point in Lemma \ref{7.19.2} is that it
requires only the smallness of $K_0\|e_0\|$, which is different from
proposition \ref{P1} where the smallness of $K_0\|\omega\|$ is needed.
This will allow us to pass through the approximation argument due to the perturbation of the initial guess.

We now derive a perturbation result on $\|x_n-\hat{x}_n\|$ and $\|T(x_n-\hat{x}_n)\|$
relative to the change of the initial guess. For simplicity of the presentation we set
$$
\hat{e}_n:=\hat{x}_n-x^\dag, \qquad T_n=F'(x_n), \qquad \hat{T}_n =F'(\hat{x}_n).
$$
It follows from (\ref{7.15.2}) and (\ref{7.15.3}) that
\begin{align*}
x_{n+1}-\hat{x}_{n+1} &=x_n-\hat{x}_n -g_{\a_n}(T_n^* T_n) T_n^* (F(x_n)-y)
+ g_{\a_n}(\hat{T}_n^* \hat{T}_n) \hat{T}_n^* (F(\hat{x}_n)-y)\\
&= r_{\a_n}(T^*T) (x_n-\hat{x}_n)-g_{\a_n}(T^*T)T^*\left(F(x_n)-F(\hat{x}_n)-T (x_n-\hat{x}_n)\right)\\
&\quad\, -\left[g_{\a_n}(T_n^*T_n)T_n^*-g_{\a_n}(T^*T)T^*\right] \left(F(x_n)-F(\hat{x}_n)\right)\\
&\quad \, -\left[g_{\a_n}(T_n^*T_n)T_n^*-g_{\a_n}(\hat{T}_n^*\hat{T}_n)\hat{T}_n^* \right] \left(F(\hat{x}_n)-y\right).
\end{align*}
By telescoping this identity we obtain
\begin{align}\label{7.15.4}
x_{n+1}-\hat{x}_{n+1} &=\prod_{k=0}^n r_{\a_k}(T^*T) (x_0-\hat{x}_0) \nonumber\\
& -\sum_{j=0}^n \prod_{k=j+1}^n r_{\a_k}(T^*T) g_{\a_j}(T^*T)T^*\left(F(x_j)-F(\hat{x}_j)-T (x_j-\hat{x}_j)\right) \nonumber\\
& -\sum_{j=0}^n \prod_{k=j+1}^n r_{\a_k}(T^*T)\left[g_{\a_j}(T_j^*T_j)T_j^*-g_{\a_j}(T^*T)T^*\right] \left(F(x_j)-F(\hat{x}_j)\right) \nonumber\\
& -\sum_{j=0}^n \prod_{k=j+1}^n r_{\a_k}(T^*T)\left[g_{\a_j}(T_j^*T_j)T_j^*-g_{\a_j}(\hat{T}_j^*\hat{T}_j)\hat{T}_j^* \right] \left(F(\hat{x}_j)-y\right).
\end{align}

\begin{lemma}\label{L9}
Let $F$ satisfy Assumptions \ref{A1}, let $\{g_\a\}$
satisfy Assumptions \ref{A4} and \ref{A5},  and let $\{\a_n\}$ be
a sequence of positive numbers satisfying (\ref{7.19.1}). If $K_0\|e_0\|$ and $K_0\|\hat{e}_0\|$
are suitably small, then
\begin{equation}\label{7.15.5}
\|x_n-\hat{x}_n\|\le 2\|x_0-\hat{x}_0\| \quad \mbox{and} \quad \|T(x_n-\hat{x}_n)\|\le 2 c_0 s_n^{-1/2} \|x_0-\hat{x}_0\|
\end{equation}
for $n=0,1,\cdots$.
\end{lemma}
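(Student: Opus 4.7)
I would proceed by induction on $n$, using the telescoping identity (\ref{7.15.4}). The base case $n=0$ is immediate from $\|T\|\le \sqrt{\a_0}\le c_0 s_0^{-1/2}$ (Assumption \ref{A1}(b)). For the inductive step, assuming (\ref{7.15.5}) for all $0\le n\le l$, I plan to bound the right-hand side of (\ref{7.15.4}) in two ways: directly, to estimate $\|x_{l+1}-\hat{x}_{l+1}\|$, and after multiplying on the left by $T$ and using $T\prod_k r_{\a_k}(T^*T)=\prod_k r_{\a_k}(TT^*)\,T$ together with $\|TA\|=\|(T^*T)^{1/2}A\|$ (via the polar decomposition), to estimate $\|T(x_{l+1}-\hat{x}_{l+1})\|$.

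Each of the four terms on the right of (\ref{7.15.4}) matches an already established tool. The leading product $\prod_{k=0}^l r_{\a_k}(T^*T)(x_0-\hat{x}_0)$ is controlled via Assumption \ref{A5} with $\nu=0$ (respectively $\nu=1/2$ for the $T$-version). The second sum, containing the nonlinear remainder $F(x_j)-F(\hat{x}_j)-T(x_j-\hat{x}_j)$, is handled by Assumption \ref{A5} with $\nu=1/2$ (respectively $\nu=1$), together with the elementary estimate
$$
\|F(x_j)-F(\hat{x}_j)-T(x_j-\hat{x}_j)\|\le K_0\,\max\{\|e_j\|,\|\hat{e}_j\|\}\,\|T(x_j-\hat{x}_j)\|
$$
derived from Assumption \ref{A1}(a) by writing $F(x_j)-F(\hat{x}_j)=\int_0^1 R(\hat{x}_j+t(x_j-\hat{x}_j),x^\dag)\,T(x_j-\hat{x}_j)\,dt$. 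The third sum is a direct application of Lemma \ref{L10} at $x=x_j$ with $(a,b)=(0,0)$ and $(1/2,0)$ respectively. The fourth sum, which carries the genuinely new difference of filters $g_{\a_j}(T_j^*T_j)T_j^*-g_{\a_j}(\hat{T}_j^*\hat{T}_j)\hat{T}_j^*$ at two distinct perturbed operators, is exactly the setting of Lemma \ref{L11} with $\mu=0$ and $\mu=1/2$ respectively.

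To close the induction, I would invoke Lemma \ref{L7.16.0} for both $\{x_n\}$ and $\{\hat{x}_n\}$ to obtain $\|e_j\|,\|\hat{e}_j\|\lesssim \|e_0\|+\|\hat{e}_0\|$ and $\|F(\hat{x}_j)-y\|\lesssim \|\hat{e}_0\|s_j^{-1/2}$, and use the induction hypothesis to infer $\|T(x_j-\hat{x}_j)\|\le 2c_0 s_j^{-1/2}\|x_0-\hat{x}_0\|$ and hence $\|F(x_j)-F(\hat{x}_j)\|\lesssim s_j^{-1/2}\|x_0-\hat{x}_0\|$. After substitution, every excess contribution to $\|x_{l+1}-\hat{x}_{l+1}\|$ carries the factor $\sum_{j=0}^l \a_j^{-1}(s_l-s_{j-1})^{-1/2}s_j^{-1/2}$, which is $O(1)$ by Lemma \ref{L2} with $p=q=1/2$; every excess contribution to $\|T(x_{l+1}-\hat{x}_{l+1})\|$ carries the factor $\sum_{j=0}^l \a_j^{-1}(s_l-s_{j-1})^{-1}s_j^{-1/2}$, which is $O(s_l^{-1/2})$ by (\ref{7.19.2}). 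Combined with $s_{l+1}\le c_0 s_l$, this yields
$$
\|x_{l+1}-\hat{x}_{l+1}\|\le \bigl(1+CK_0(\|e_0\|+\|\hat{e}_0\|)\bigr)\|x_0-\hat{x}_0\|
$$
and
$$
\|T(x_{l+1}-\hat{x}_{l+1})\|\le c_0\bigl(1+CK_0(\|e_0\|+\|\hat{e}_0\|)\bigr)s_{l+1}^{-1/2}\|x_0-\hat{x}_0\|,
$$
so that for $K_0\|e_0\|$ and $K_0\|\hat{e}_0\|$ small enough the constants $2$ and $2c_0$ are recovered and the induction closes.

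The main obstacle will be the fourth term of (\ref{7.15.4}): both $T_j^*T_j$ and $\hat{T}_j^*\hat{T}_j$ differ from the reference $T^*T$, so Lemma \ref{L10} does not apply directly. The value of Lemma \ref{L11} is that its bound depends linearly on $K_0\|x_j-\hat{x}_j\|$ alone, which is precisely what keeps the induction closable under the hypothesis $\|x_j-\hat{x}_j\|\le 2\|x_0-\hat{x}_0\|$. A secondary delicate point is the sum $\sum_{j=0}^l \a_j^{-1}(s_l-s_{j-1})^{-1}s_j^{-1/2}$, for which the sharp bound $O(s_l^{-1/2})$ relies on (\ref{7.19.2}); this is the technical reason the present lemma requires the geometric condition (\ref{7.19.1}) rather than just (\ref{60}).
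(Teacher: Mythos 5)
Your proposal is correct and follows essentially the same route as the paper's proof: induction on $n$ via the telescoped identity (\ref{7.15.4}), with the four terms handled by Assumption \ref{A5}, Lemma \ref{L10} with $(a,b)=(0,0)$ and $(1/2,0)$, and Lemma \ref{L11} with $\mu=0$ and $\mu=1/2$, the nonlinear remainder bounded through the integral representation and Lemma \ref{L7.16.0}, and the induction closed by Lemma \ref{L2} together with (\ref{7.19.2}). The only cosmetic difference is your $\max\{\|e_j\|,\|\hat{e}_j\|\}$ in place of the paper's $\tfrac12(\|e_j\|+\|\hat{e}_j\|)$, which changes nothing.
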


\begin{proof} We will show (\ref{7.15.5}) by induction. Since $\|T\|\le \sqrt{\a_0}$, (\ref{7.15.5}) holds for $n=0$.
In the following we will assume that (\ref{7.15.5}) holds for $0\le n\le l$, and show that it is also true for $n=l+1$.

In view of Assumption \ref{A5}, Lemma \ref{L10} with $a=b=0$, and Lemma \ref{L11} with $\mu=0$,
it follows from (\ref{7.15.4}) that
\begin{align}\label{7.15.6}
\|x_{l+1}-\hat{x}_{l+1}\| &\le \|x_0-\hat{x}_0\| \nonumber\\
& + C \sum_{j=0}^l \frac{1}{\a_j} (s_l-s_{j-1})^{-1/2} \|F(x_j)-F(\hat{x}_j)-T (x_j-\hat{x}_j)\| \nonumber\\
& + C \sum_{j=0}^l \frac{1}{\a_j} (s_l-s_{j-1})^{-1/2} K_0\|e_j\| \|F(x_j)-F(\hat{x}_j)\| \nonumber\\
& + C \sum_{j=0}^l \frac{1}{\a_j} (s_l-s_{j-1})^{-1/2} K_0\|x_j-\hat{x}_j\|\|F(\hat{x}_j)-y\|.
\end{align}
Next we multiply (\ref{7.15.4}) by $T$.  By using Assumption \ref{A5}, Lemma \ref{L10} with $a=1/2$ and $b=0$,
and Lemma \ref{L11} with $\mu=1/2$, we obtain
\begin{align}\label{7.15.7}
\|T(x_{l+1}-\hat{x}_{l+1})\| &\le s_l^{-1/2} \|x_0-\hat{x}_0\| \nonumber\\
& + C \sum_{j=0}^l \frac{1}{\a_j} (s_l-s_{j-1})^{-1} \|F(x_j)-F(\hat{x}_j)-T (x_j-\hat{x}_j)\| \nonumber\\
& + C \sum_{j=0}^l \frac{1}{\a_j} (s_l-s_{j-1})^{-1} K_0\|e_j\| \|F(x_j)-F(\hat{x}_j)\| \nonumber\\
& + C \sum_{j=0}^l \frac{1}{\a_j} (s_l-s_{j-1})^{-1} K_0\|x_j-\hat{x}_j\|\|F(\hat{x}_j)-y\|.
\end{align}
From Lemma \ref{L7.16.0} and Assumption \ref{A1} it follows that
\begin{equation}\label{7.16.1}
\|F(\hat{x}_j)-y\|\lesssim \|T \hat{e}_j\|+K_0\|\hat{e}_j\|\|T \hat{e}_j\| \lesssim s_j^{-1/2} \|\hat{e}_0\|.
\end{equation}
Moreover, By using Assumption \ref{A1} we have
\begin{align*}
F(x_j)-F(\hat{x}_j)-T(x_j-\hat{x}_j) &=\int_0^1 \left[F'(\hat{x}_j+t(x_j-\hat{x}_j))-T\right] (x_j-\hat{x}_j) dt\\
&=\int_0^1 \left[R(\hat{x}_j+t(x_j-\hat{x}_j), x^\dag)-I\right] T(x_j-\hat{x}_j) dt.
\end{align*}
Consequently
\begin{align*}
\|F(x_j)-F(\hat{x}_j)-T(x_j-\hat{x}_j)\|
&\le \int_0^1 \|R(\hat{x}_j+t(x_j-\hat{x}_j), x^\dag)-I\| \|T(x_j-\hat{x}_j)\| dt\\
&\le \frac{1}{2} K_0\left(\|e_j\|+\|\hat{e}_j\|\right) \|T(x_j-\hat{x}_j)\|.
\end{align*}
With the help of Lemma \ref{L7.16.0} it yields
\begin{align}\label{7.16.2}
\|F(x_j)-F(\hat{x}_j)-T(x_j-\hat{x}_j)\|\le K_0\left(\|e_0\|+\|\hat{e}_0\|\right) \|T(x_j-\hat{x}_j)\|.
\end{align}
This in particular implies
\begin{equation}\label{7.16.3}
\|F(x_j)-F(\hat{x}_j)\|\le 2\|T(x_j-\hat{x}_j)\|.
\end{equation}
By virtue of (\ref{7.16.1}), (\ref{7.16.2}), (\ref{7.16.3}) and the induction hypotheses, we have from
(\ref{7.15.6}) and (\ref{7.15.7}) that
\begin{align}\label{7.16.4}
\|x_{l+1}-\hat{x}_{l+1}\| &\le \|x_0-\hat{x}_0\| \nonumber\\
& +C K_0 (\|e_0\|+\|\hat{e}_0\|) \|x_0-\hat{x}_0\| \sum_{j=0}^l \frac{1}{\a_j} (s_l-s_{j-1})^{-1/2} s_j^{-1/2}
\end{align}
and
\begin{align}\label{7.16.5}
\|T(x_{l+1}-\hat{x}_{l+1})\| &\le s_l^{-1/2} \|x_0-\hat{x}_0\| \nonumber\\
& +CK_0(\|e_0\|+\|\hat{e}_0\|) \|x_0-\hat{x}_0\| \sum_{j=0}^l \frac{1}{\a_j} (s_l-s_{j-1})^{-1} s_j^{-1/2}.
\end{align}
With the help of Lemma \ref{L2} and (\ref{7.19.2}) we can derive
\begin{align*}
\|x_{l+1}-\hat{x}_{l+1}\| &\le \left(1+CK_0(\|e_0\|+\|\hat{e}_0\|)\right) \|x_0-\hat{x}_0\| \\
&\le 2\|x_0-\hat{x}_0\|
\end{align*}
and
\begin{align*}
\|T(x_{l+1}-\hat{x}_{l+1})\| &\le \left(1+CK_0(\|e_0\|+\|\hat{e}_0\|)\right)  s_l^{-1/2} \|x_0-\hat{x}_0\|\\
&\le 2 c_0 s_{l+1}^{-1/2} \|x_0-\hat{x}_0\|
\end{align*}
if $K_0\|e_0\|$ and $K_0\|\hat{e}_0\|$ are suitably small. The proof is thus complete.
\end{proof}

\begin{theorem}\label{T3}
Let $F$ satisfy Assumptions \ref{A1}, let $\{g_\a\}$
satisfy Assumptions \ref{A4} and \ref{A5},  and let $\{\a_n\}$ be
a sequence of positive numbers satisfying (\ref{7.19.1}). If $e_0\in {\mathcal N}(T)^\perp$
and $K_0\|e_0\|$ is suitably small, then
\begin{equation}\label{5.6}
\lim_{n\rightarrow \infty} \|x_n-x^\dag\|=0 \quad \mbox{and} \quad
\lim_{n\rightarrow \infty} s_n^{1/2} \|T(x_n-x^\dag)\|=0
\end{equation}
for the sequence $\{x_n\}$ defined by (\ref{7.15.2}).
\end{theorem}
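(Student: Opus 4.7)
The plan is to reduce the general case $e_0\in\mathcal{N}(T)^\perp$ to the case where $e_0$ satisfies a mild source condition, by combining the explicit rate of Lemma \ref{L7.19.2} with the stability estimate of Lemma \ref{L9}, exploiting density of $\mathcal{R}(T^*)$ in $\mathcal{N}(T)^\perp$.

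First, I would observe that Theorem \ref{T3} is an immediate consequence of Lemma \ref{L7.19.2} whenever $e_0$ has the form $e_0=(T^*T)^{1/4}\omega$ with $\omega\in\mathcal{N}(T)^\perp$: the lemma yields $\|e_n\|\le 2c_0\|\omega\|s_n^{-1/4}$ and $s_n^{1/2}\|Te_n\|\le 2c_0\|\omega\|s_n^{-1/4}$, both of which tend to $0$ since $s_n\to\infty$. In particular, if $\hat{e}_0\in\mathcal{R}(T^*)$, the polar decomposition $T^*=(T^*T)^{1/2}U$ lets us write $\hat{e}_0=(T^*T)^{1/4}\omega$ with $\omega=(T^*T)^{1/4}Uv\in\mathcal{N}(T)^\perp$ (where $\hat{e}_0=T^*v$), so this regular case is settled.

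Next, for the general $e_0\in\mathcal{N}(T)^\perp=\overline{\mathcal{R}(T^*)}$, I would use an approximation argument. Given $\varepsilon>0$, choose $\hat{x}_0$ with $\hat{e}_0:=\hat{x}_0-x^\dag\in\mathcal{R}(T^*)$ and $\|\hat{x}_0-x_0\|<\varepsilon$. For $\varepsilon$ small enough, $K_0\|\hat{e}_0\|$ is still as small as needed, so the previous paragraph applies to the iterates $\{\hat{x}_n\}$ defined by (\ref{7.15.3}): we obtain
\[
\lim_{n\to\infty}\|\hat{e}_n\|=0\quad\text{and}\quad\lim_{n\to\infty}s_n^{1/2}\|T\hat{e}_n\|=0.
\]
Then Lemma \ref{L9} gives the stability bounds $\|x_n-\hat{x}_n\|\le 2\|x_0-\hat{x}_0\|<2\varepsilon$ and $\|T(x_n-\hat{x}_n)\|\le 2c_0 s_n^{-1/2}\|x_0-\hat{x}_0\|<2c_0\varepsilon\, s_n^{-1/2}$.

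Combining via the triangle inequality,
\[
\|e_n\|\le\|\hat{e}_n\|+2\varepsilon,\qquad s_n^{1/2}\|Te_n\|\le s_n^{1/2}\|T\hat{e}_n\|+2c_0\varepsilon,
\]
so $\limsup_{n\to\infty}\|e_n\|\le 2\varepsilon$ and $\limsup_{n\to\infty} s_n^{1/2}\|Te_n\|\le 2c_0\varepsilon$. Since $\varepsilon>0$ is arbitrary, both limits in (\ref{5.6}) vanish. The only delicate point—and the step I would check carefully—is to make sure that the smallness hypotheses on $K_0\|\hat{e}_0\|$ and $K_0\|e_0\|$ required by Lemmas \ref{L7.16.0}, \ref{L7.19.2} and \ref{L9} are satisfied uniformly for all sufficiently small $\varepsilon$; this is routine since $\|\hat{e}_0\|\le\|e_0\|+\varepsilon$ and $K_0\|e_0\|$ is small to begin with.
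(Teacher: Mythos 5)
Your proposal is correct and follows essentially the same route as the paper: reduce to $\hat e_0\in\mathcal R(T^*)\subset\mathcal R((T^*T)^{1/4})$ via density, apply Lemma \ref{L7.19.2} to the perturbed iteration and Lemma \ref{L9} for stability with respect to the initial guess, and conclude by the triangle inequality with $\varepsilon$ arbitrary. The one point you flag as delicate is handled in the paper exactly as you suggest (taking $\varepsilon<\|e_0\|$ so that $K_0\|\hat e_0\|\le 2K_0\|e_0\|$), so there is no gap.
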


\begin{proof}
Let $0<\varepsilon<\|e_0\|$ be an arbitrarily small number. Since
$e_0\in {\mathcal N}(T)^\perp =\overline{{\mathcal
R}(T^*)}$, there is an $\hat{x}_0\in X$ such that
$\hat{e}_0:=\hat{x}_0-x^\dag\in {\mathcal R}(T^*)$ and
$\|x_0-\hat{x}_0\|<\varepsilon$.  Note that
$K_0\|\hat{e}_0\|\le 2 K_0\|e_0\|$. Thus, if
$K_0\|e_0\|$ is suitably small, then for the sequence
$\{\hat{x}_n\}$ defined by (\ref{7.15.3}), it follows from Lemma
\ref{L9} that
$$
\|x_n-\hat{x}_n\|\le 2\|x_0-\hat{x}_0\|<2\varepsilon
$$
and
$$
s_n^{1/2} \|T(x_n-\hat{x}_n)\|\le
2c_0\|x_0-\hat{x}_0\|< 2c_0\varepsilon
$$
for all $n\ge 0$. On the other hand, since $\hat{e}_0\in {\mathcal R}(T^*)={\mathcal R}((T^*T)^{1/2})
\subset {\mathcal R}((T^*T)^{1/4})$, from Lemma \ref{L7.19.2} we have
$\|\hat{e}_n\|\rightarrow 0$ and $s_n^{1/2} \|T \hat{e}_n\| \rightarrow 0$
as $n\rightarrow \infty$. Thus, there
is a $n_0$ such that $\|\hat{e}_n\|<\varepsilon$ and $s_n^{1/2}
\|T \hat{e}_n\|< c_0 \varepsilon$ for all $n\ge n_0$.
Consequently
$$
\|e_n\|\le \|x_n-\hat{x}_n\|+\|\hat{e}_n\|<3\varepsilon
$$
and
$$
s_n^{1/2}\| T e_n\|\le s_n^{1/2} \|T (x_n-\hat{x}_n)\|
+s_n^{1/2} \|T \hat{e}_n\|<3c_0
\varepsilon
$$
for all $n\ge n_0$. Since $\varepsilon>0$ is arbitrarily small, we
therefore obtain (\ref{5.6}).
\end{proof}

\subsection{Stability estimates} In this subsection we will derive the stability estimates
on $\|x_n^\d-x_n\|$ for $0\le n \le \hat{n}_\d$, where
$\hat{n}_\d$ is defined by (\ref{7.20.5}). We will use the notations
$$
\A:=F'(x^\dag)^* F'(x^\dag), \quad \A_n:=F'(x_n)^* F'(x_n), \quad \A_n^\d:=F'(x_n^\d)^* F'(x_n^\d).
$$
The main result is as follows.

\begin{proposition}\label{P10}
Let $F$ satisfy Assumptions \ref{A1}, let $\{g_\a\}$
satisfy Assumptions \ref{A4} and \ref{A5},  and let $\{\a_n\}$ be
a sequence of positive numbers satisfying (\ref{7.19.1}). If $K_0\|e_0\|$ is suitably small, then
\begin{equation}\label{7.16.6}
\|x_n^\d-x_n\|\lesssim s_n^{1/2} \d
\end{equation}
and
\begin{equation}\label{7.16.7}
\|F(x_n^\d)-F(x_n)-y^\d+y\|\le \left(1+CK_0\|e_0\|\right) \d
\end{equation}
for $0\le n\le \hat{n}_\d$.
\end{proposition}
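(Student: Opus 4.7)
My plan is to mirror the strategy of Lemma~\ref{L9}, treating $\{x_n\}$ as the ``unperturbed'' reference iteration and $\{x_n^\d\}$ as the ``perturbed'' iteration, with the essential new feature being the appearance of the data perturbation $y^\d-y$. Subtracting (\ref{7.15.2}) from (\ref{2}) and repeatedly adding and subtracting the operators $g_{\a_n}(T^*T)T^*$ and $g_{\a_n}(T_n^*T_n)T_n^*$, I would rewrite the one-step difference as
\begin{align*}
x_{n+1}^\d-x_{n+1}&=r_{\a_n}(T^*T)(x_n^\d-x_n)+g_{\a_n}(T^*T)T^*(y^\d-y)\\
&\quad -g_{\a_n}(T^*T)T^*\bigl[F(x_n^\d)-F(x_n)-T(x_n^\d-x_n)\bigr]\\
&\quad -\bigl[g_{\a_n}(T_n^*T_n)T_n^*-g_{\a_n}(T^*T)T^*\bigr]\bigl[F(x_n^\d)-F(x_n)-(y^\d-y)\bigr]\\
&\quad -\bigl[g_{\a_n}((T_n^\d)^*T_n^\d)(T_n^\d)^*-g_{\a_n}(T_n^*T_n)T_n^*\bigr](F(x_n^\d)-y^\d),
\end{align*}
using $\la g_{\a_n}(\la)=1-r_{\a_n}(\la)$. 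Telescoping from $n=0$ (with $x_0^\d-x_0=0$) then yields an analogue of (\ref{7.15.4}) containing an extra noise-driven term $\sum_{j=0}^n\prod_{k=j+1}^n r_{\a_k}(T^*T)g_{\a_j}(T^*T)T^*(y^\d-y)$.

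I would then run an induction on (\ref{7.16.6}), augmented by the companion bound $\|T(x_n^\d-x_n)-(y^\d-y)\|\le C\d$, using this pair of hypotheses for $0\le j\le l$ to close at $j=l+1$. For the term-by-term bounds, Assumption~\ref{A5} gives the $\lesssim(1/\a_j)(s_n-s_{j-1})^{-1/2}\d$ estimate for the noise term, Lemma~\ref{L2} sums this to $\lesssim s_n^{1/2}\d$; Lemma~\ref{L10} (with $a=b=0$) handles the $T_j$-vs-$T$ difference with the factor $K_0\|e_j\|\lesssim K_0\|e_0\|$; Lemma~\ref{L11} (with $\mu=0$) handles the $T_j^\d$-vs-$T_j$ difference with factor $K_0\|x_j^\d-x_j\|$, which by the induction hypothesis is $\lesssim K_0 s_j^{1/2}\d$; and the linearization error is bounded by $\tfrac12 K_0(\|e_j^\d\|+\|e_j\|)\|T(x_j^\d-x_j)\|\lesssim K_0\|e_0\|\d$. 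Lemmas~\ref{L7.16.0} and \ref{L7.20.1} supply the required bounds $\|F(x_j)-y\|,\|F(x_j^\d)-y^\d\|\lesssim\|e_0\|s_j^{-1/2}$, and a careful accounting via Lemma~\ref{L2} shows that every auxiliary sum collapses to $s_n^{1/2}\d$ times a factor of $K_0\|e_0\|$ (which is small), so the induction closes with a constant independent of $\d$ and $n$.

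For the bound on $\|T(x_{n+1}^\d-x_{n+1})-(y^\d-y)\|$ I would multiply the telescoped identity by $T$ and invoke the identity (\ref{1111}) (with $T$ in place of its adjoint), so that the noise contribution collapses to $-\prod_{j=0}^n r_{\a_j}(TT^*)(y^\d-y)$, whose norm is bounded by $\d$ using Assumption~\ref{A5}. The remaining sums are handled exactly as above but now with $a=1/2$ in Lemma~\ref{L10} and $\mu=1/2$ in Lemma~\ref{L11}, producing the summand $(1/\a_j)(s_n-s_{j-1})^{-1}s_j^{-1/2}$, to which (\ref{7.19.2}) applies to give an $s_n^{-1/2}$ factor; since $s_n^{-1/2}\|e_0\|\lesssim\d$ on the range $n\le\hat n_\d$, these contributions are absorbed into $CK_0\|e_0\|\d$, yielding $\|T(x_n^\d-x_n)-(y^\d-y)\|\le(1+CK_0\|e_0\|)\d$.

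Finally, to obtain (\ref{7.16.7}) I would use
$$
F(x_n^\d)-F(x_n)-T(x_n^\d-x_n)=\int_0^1\bigl[R(x_n+t(x_n^\d-x_n),x^\dag)-I\bigr]T(x_n^\d-x_n)\,dt,
$$
whose norm is $\le\tfrac12 K_0(\|e_n^\d\|+\|e_n\|)\|T(x_n^\d-x_n)\|\lesssim K_0\|e_0\|\d$ by the previous step and Lemmas~\ref{L7.16.0}, \ref{L7.20.1}, so the triangle inequality with the bound on $\|T(x_n^\d-x_n)-(y^\d-y)\|$ delivers (\ref{7.16.7}). The main obstacle I foresee is bookkeeping at the induction step: each difference term must be expressed in a form where the ``bad'' factor $K_0\|x_j^\d-x^\dag\|$ produced by a naive application of Lemma~\ref{L10} is replaced by the smaller $K_0\|x_j^\d-x_j\|$, and this is precisely why one splits the operator difference through the intermediate $T_j^*T_j$ and invokes Lemma~\ref{L11} for the second piece.
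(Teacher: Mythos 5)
Your proposal follows essentially the same route as the paper: the identical four-term one-step decomposition (residual term, linearized noise/Taylor term, $T_n$-versus-$T$ difference acting on $w_n$, and $T_n^\d$-versus-$T_n$ difference acting on $F(x_n^\d)-y^\d$), the same telescoping from $x_0^\d=x_0$, the same induction with a companion bound on $T(x_n^\d-x_n)$ up to the noise, the same use of Lemmas \ref{L10} and \ref{L11} with the parameters you name, and the same derivation of (\ref{7.16.7}) from the linearization error. One step as written is wrong, though: you invoke ``$s_n^{-1/2}\|e_0\|\lesssim \d$ on the range $n\le\hat n_\d$,'' but (\ref{7.20.5}) gives exactly the reverse inequality $\d\lesssim s_n^{-1/2}\|e_0\|$ for $n<\hat n_\d$ (the forward inequality holds only at $n=\hat n_\d$); indeed the correct direction is what you yourself need to justify $\|F(x_j^\d)-y^\d\|\lesssim s_j^{-1/2}\|e_0\|$. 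Fortunately the false inequality is not needed: in the problematic term the factors $\|x_j^\d-x_j\|\lesssim s_j^{1/2}\d$ and $\|F(x_j^\d)-y^\d\|\lesssim s_j^{-1/2}\|e_0\|$ cancel to give $\|e_0\|\d$ per summand, so (\ref{7.19.2}) with $\mu=0$ (rather than $\mu=1/2$) bounds the sum by $C K_0\|e_0\|\d$, which is exactly how the paper closes the estimate. With that bookkeeping corrected, your argument coincides with the paper's proof.
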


\begin{proof}
We first show (\ref{7.16.6}) by establishing
\begin{equation}\label{stable}
\|x_n^\d-x_n\|\le 2 b_2 c_2 s_n^{1/2} \d \qquad \mbox{and} \qquad \|T(x_n^\d-x_n)\|\le 3 \d
\end{equation}
for $0\le n\le \hat{n}_\d$, where $b_2$ and $c_2$ are the constants appearing in (\ref{g2}) and
(\ref{7.29.1}) respectively. It is clear that (\ref{stable}) is true for $n=0$.
Now we assume that (\ref{stable}) is true for all
$0\le n\le l$ for some $l<\hat{n}_\d$ and show that it is also true for $n=l+1$. We set
\begin{align*}
v_n&:=F(x_n^\d)-F(x_n)-T(x_n^\d-x_n),\\
w_n&:=F(x_n^\d)-F(x_n)-y^\d+y.
\end{align*}
It then follows from the definition of $\{x_n^\d\}$ and $\{x_n\}$ that
\begin{align*}
x_{n+1}^\d-x_{n+1}&=x_n^\d-x_n-g_{\a_n}(\A_n^\d) F'(x_n^\d)^* (F(x_n^\d)-y^\d)\\
& \quad\, +g_{\a_n}(\A_n)F'(x_n)^* (F(x_n)-y)\\
&= r_{\a_n}(\A) (x_n^\d-x_n)- g_{\a_n}(\A)F'(x^\dag)^* \left(v_n-y^\d+y\right)\\
&\quad\, -\left[g_{\a_n}(\A_n) F'(x_n)^*-g_{\a_n}(\A) F'(x^\dag)^*\right] w_n \\
&\quad \, -\left[g_{\a_n}(\A_n^\d) F'(x_n^\d)^*- g_{\a_n}(\A_n) F'(x_n)^*\right] \left(F(x_n^\d)-y^\d\right).
\end{align*}
By telescoping the above equation and noting that $x_0^\d=x_0$ we obtain
\begin{align}\label{7.16.8}
x_{l+1}^\d-x_{l+1}
&= \sum_{j=0}^l \prod_{k=j+1}^l r_{\a_k}(\A) g_{\a_j}(\A)F'(x^\dag)^* \left(y^\d-y- v_j\right) \nonumber\\
& -\sum_{j=0}^l \prod_{k=j+1}^l r_{\a_k}(\A)
\left[g_{\a_j}(\A_j^\d) F'(x_j^\d)^*- g_{\a_j}(\A_j) F'(x_j)^*\right] \left(F(x_j^\d)-y^\d\right)\nonumber\\
& -\sum_{j=0}^l \prod_{k=j+1}^l r_{\a_k}(\A) \left[g_{\a_j}(\A_j) F'(x_j)^*-g_{\a_j}(\A) F'(x^\dag)^*\right] w_j.
\end{align}
In view of Assumption \ref{A4}, Lemma \ref{L10} and Lemma \ref{L11} it follows that
\begin{align}\label{7.16.9}
\|x_{l+1}^\d-x_{l+1}\|
&\le  b_2 \sum_{j=0}^l \frac{1}{\a_j} (s_l-s_{j-1})^{-1/2}  \left(\d+\|v_j\|\right) \nonumber\\
& + C \sum_{j=0}^l \frac{1}{\a_j} (s_l-s_{j-1})^{-1/2} K_0\|x_j^\d-x_j\| \left\|F(x_j^\d)-y^\d\right\|\nonumber\\
& + C \sum_{j=0}^l \frac{1}{\a_j} (s_l-s_{j-1})^{-1/2} K_0\|e_j\| \|w_j\|.
\end{align}
By multiplying (\ref{7.16.8}) by $T$ and using  (\ref{1111}) we obtain with $\B=F'(x^\dag) F'(x^\dag)$ that
\begin{align*}
T (&x_{l+1}^\d-x_{l+1})-y^\d+y\nonumber\\
&= \prod_{j=0}^l r_{\a_j}(\B) (y-y^\d) - \sum_{j=0}^l \prod_{k=j+1}^l r_{\a_k}(\B) g_{\a_j}(\B)\B v_j \nonumber\\
& -\sum_{j=0}^l T\prod_{k=j+1}^l r_{\a_k}(\A)
\left[g_{\a_j}(\A_j^\d) F'(x_j^\d)^*- g_{\a_j}(\A_j) F'(x_j)^*\right] \left(F(x_j^\d)-y^\d\right)\nonumber\\
& -\sum_{j=0}^l T\prod_{k=j+1}^l r_{\a_k}(\A) \left[g_{\a_j}(\A_j) F'(x_j)^*-g_{\a_j}(\A) F'(x^\dag)^*\right] w_j.
\end{align*}
It follows from Assumption \ref{A4}, Lemma \ref{L10} and Lemma \ref{L11} that
\begin{align}\label{7.16.10}
\|T &(x_{l+1}^\d-x_{l+1})-y^\d+y\| \nonumber\\
&\le \d + C \sum_{j=0}^l \frac{1}{\a_j} (s_l-s_{j-1})^{-1} \|v_j\|
+ C\sum_{j=0}^l \frac{1}{\a_j} (s_l-s_{j-1})^{-1} K_0\|e_j\| \| w_j\|\nonumber\\
&\quad \, + C \sum_{j=0}^l \frac{1}{\a_j}(s_l-s_{j-1})^{-1} K_0\|x_j^\d-x_j\| \|F(x_j^\d)-y^\d\|.
\end{align}
With the help of Assumption \ref{A1}, Lemma \ref{L7.20.1}, Lemma \ref{L7.16.0} and the induction hypotheses
we have
$$
\|v_j\|\lesssim K_0\left(\|e_j\|+\|e_j^\d\|\right) \|T(x_j^\d-x_j)\|\lesssim K_0\|e_0\| \d
$$
and
$$
\|w_j\|\le \d +\|T(x_j^\d-x_j)\|+\|v_j\|\lesssim \d.
$$
Moreover, by using Assumption \ref{A1}, Lemma \ref{L7.20.1} and (\ref{7.20.5}) we have for $0\le j\le l$
$$
\|F(x_j^\d)-y^\d\|\le \d+\|T e_j^\d\|+ K_0\|e_j^\d\| \|T e_j^\d\|\lesssim \d +s_j^{-1/2} \|e_0\|\lesssim s_j^{-1/2} \|e_0\|.
$$
Combining the above three inequalities with (\ref{7.16.9}) and (\ref{7.16.10}) and using the induction hypothesis
$\|x_j^\d-x_j\|\lesssim s_j^{1/2} \d$ for $0\le j\le l$ it follows that
\begin{align}\label{7.16.11}
\|x_{l+1}^\d-x_{l+1}\|
&\le  (b_2+CK_0\|e_0\|) \d \sum_{j=0}^l \frac{1}{\a_j} (s_l-s_{j-1})^{-1/2}
\end{align}
and
\begin{align}\label{7.16.12}
\|T (x_{l+1}^\d-x_{l+1})-y^\d+y\|\le  \d + K_0\|e_0\| \d \sum_{j=0}^l \frac{1}{\a_j} (s_l-s_{j-1})^{-1}.
\end{align}
By Lemma \ref{L2} and the fact $s_l\le s_{l+1}$ we obtain for small $K_0\|e_0\|$ that
$$
\|x_{l+1}^\d-x_{l+1}\|\le (b_2 c_2 +CK_0\|e_0\|) s_l^{1/2} \d\le 2 b_2 c_2 s_{l+1}^{1/2} \d
$$
Moreover, with the help of (\ref{7.19.2}) we can derive
\begin{align*}
\|T (x_{l+1}^\d-x_{l+1})-y^\d+y\|\le  (1+C K_0\|e_0\|) \d.
\end{align*}
Thus $\|T(x_{l+1}^\d-x_{l+1})\|\le 3 \d$ if $K_0\|e_0\|$ is suitably small.
We therefore complete the proof of (\ref{stable}).

Next we will prove (\ref{7.16.7}). From the above proof we in fact obtain
$$
\|T(x_n^\d-x_n)-y^\d+y\|\le (1+CK_0\|e_0\|) \d, \qquad 0\le n\le \hat{n}_\d.
$$
Therefore, it follows from Assumption \ref{A1}, Lemma \ref{L7.20.1} and Lemma \ref{L7.16.0} that
\begin{align*}
\|F(x_n^\d)&-F(x_n)-y^\d+y\|\\
&\le \|F(x_n^\d)-F(x_n)-T(x_n^\d-x_n)\|+ \|T(x_n^\d-x_n)-y^\d+y\|\\
&\le K_0(\|e_n^\d\|+\|e_n\|) \|T(x_n^\d-x_n)\| +(1+CK_0\|e_0\|) \d\\
&\le (1+CK_0\|e_0\|) \d.
\end{align*}
The proof is thus complete.
\end{proof}

\subsection{Completion of the proof of Theorem \ref{T2}}

We have shown in Lemma \ref{L7.20.1} that $n_\d\le \hat{n}_\d$. Thus we may use
the definition of $n_\d$ and Proposition \ref{P10} to obtain
\begin{equation}\label{8.1}
\|F(x_{n_\d})-y\|\le \|F(x_{n_\d}^\d)-y^\d\|
+\|F(x_{n_\d}^\d)-F(x_{n_\d})-y^\d+y\|\lesssim \d
\end{equation}
and for $0\le n<n_\d$
\begin{align*}
\tau \d &\le \|F(x_n^\d)-y^\d\| \le \|F(x_n^\d)-F(x_n)-y^\d+y\|+\|F(x_n)-y\|\\
&\le \left(1+CK_0\|e_0\|\right) \d +\|F(x_n)-y\|.
\end{align*}
Since $\tau>1$, if $K_0\|e_0\|$ is suitably small  then
\begin{equation}\label{8.2}
\d\lesssim \|F(x_n)-y\|\lesssim \|T e_n\|, \qquad 0\le
n<n_\d.
\end{equation}

We now prove the convergence of $x_{n_\d}^\d$ to $x^\dag$ as $\d\rightarrow 0$.
Assume first that there is a sequence
$\d_k\searrow 0$ such that $n_k:=n_{\d_k}\rightarrow n$ as
$k\rightarrow \infty$ for some finite integer $n$. Without loss of
generality, we can assume that $n_k=n$ for all $k$. It then
follows from (\ref{8.1}) that $F(x_n)=y$.  Thus, from (\ref{7.15.2})
we can conclude that $x_j=x_n$ for all $j\ge n$. Since Theorem \ref{T3}
implies $x_j\rightarrow x^\dag$ as $j\rightarrow \infty$,
we must have $x_n=x^\dag$, which together with Proposition
\ref{P10} implies $x_{n_k}^{\d_k}\rightarrow x^\dag$ as
$k\rightarrow \infty$.

Assume next that there is a sequence $\d_k\searrow 0$ such that
$n_k:=n_{\d_k}\rightarrow \infty$ as $k\rightarrow \infty$. Then
Theorem \ref{T3} and (\ref{8.2}) imply that
$\|e_{n_k}\|\rightarrow 0$ and $s_{n_k}^{1/2} \d_k \rightarrow 0$
as $k\rightarrow \infty$. Consequently, by Proposition \ref{P10} we
again obtain $x_{n_k}^{\d_k}\rightarrow x^\dag$ as $k\rightarrow
\infty$.

\section{\bf Numerical results}

In this section we present some numerical results to test the
theoretical conclusions given in Theorems \ref{T1} and \ref{T2}.
We consider the estimation of the
coefficient $c$ in the two-point boundary value problem
\begin{equation}\label{6.1}
\left\{\begin{array}{lll}
-u''+c u=f&\quad  \mbox{in } (0,1)\\
u(0)=g_0, & u(1)=g_1&
\end{array}\right.
\end{equation}
from the $L^2$ measurement $u^\delta$ of the state variable $u$,
where $g_0$, $g_1$ and $f\in L^2[0,1]$ are given. This inverse
problem reduces to solving (\ref{1}) with the
nonlinear operator $F: D(F)\subset L^2[0,1]\mapsto L^2[0,1]$
defined as the parameter-to-solution mapping $F(c):=u(c)$, where
$u(c)$ denotes the unique solution of (\ref{6.1}). It is well
known that $F$ is well-defined on
$$
D(F):=\left\{c\in
 L^2[0,1]:\|c-\hat{c}\|_{L^2}\le\gamma \mbox{ for some }
\hat{c}\ge 0~\hbox{a.e.}\right\}
$$
with some $\gamma>0$. Moreover, $F$ is Fr\'{e}chet
differentiable, the Fr\'{e}chet derivative and its adjoint are given by
\begin{align*}
F'(c)h&=-A(c)^{-1}(hu(c)),\\
F'(c)^*w&=-u(c)A(c)^{-1}w,
\end{align*}
where $A(c):H^2\cap H_0^1\mapsto L^2$ is defined by
$A(c)u=-u''+c u$. It has been shown in \cite{HNS96} that if, for
the sought solution $c^\dag$, $|u(c^\dag)|\ge \kappa>0$ on
$[0,1]$, then Assumption \ref{A1} is satisfied in a neighborhood of
$c^\dag$.

In the following we report some numerical results on the method
given by (\ref{2}) and (\ref{DP}) with $g_\a$ defined by (\ref{g20}),
which, in the current context, defines the iterative solutions $\{c_n^\d\}$ by
\begin{align*}\label{6.28.1}
u_{n,0}&=c_n^\d,\\
u_{n, l+1}&=u_{n,l}-F'(c_n^\d)^* \left(F(c_n^\d)-u^\d -F'(c_n^\d)(c_n^\d-u_{n,l})\right), \quad 0\le l\le [1/\a_n]-1,\\
c_{n+1}^\d&= u_{n, [1/\a_n]}.
\end{align*}
and determines the stopping index $n_\d$ by
\begin{equation}\label{6.28.2}
\|F(c_{n_\d}^\d)-u^\d\|\le \tau \d < \|F(c_n^\d)-u^\d\|, \quad 0\le
n<n_\d.
\end{equation}
During the computation, all differential equations are solved
approximately by finite difference method by dividing the interval
$[0, 1]$ into $m+1$ subintervals with equal length $h=1/(m+1)$; we
take $m=100$ in our actual computation.

\subsection*{Example 1}
We estimate $c$ in (\ref{6.1}) by assuming $f(t)=(1+t)(1+t-0.8 \sin(2\pi t))$, $g_0=1$ and
$g_1=2$. If $u(c^\dag)=1+t$, then $c^\dag=1+t-0.8 \sin(2\pi t)$ is the sought
solution. When applying the above method, we take $\a_n=2^{-n}$
and use random noise data $u^\d$ satisfying
$\|u^\d-u(c^\dag)\|_{L^2[0,1]}=\d$ with noise level $\d>0$.
As an initial guess we choose $c_0=1+t$. One can show that $c_0-c^\dag\in {\mathcal
R}(F'(c^\dag)^*)$. Thus, according to Theorem \ref{T1}, the
expected rate of convergence should be $O(\d^{1/2})$.

\begin{table}[ht]
\center{{ Table 1. Numerical results for Example 1 with
$\a_n=2^{-n}$ and three distinct values of $\tau$,
where $n_\d$ is determined by (\ref{6.28.2}),
$error:=\|c_{n_\d}^\d-c^\dag\|_{L^2}$, and $ratio:=error/\d^{1/2}$
} }
\begin{center}
\begin{tabular}{|c|c|c|c|c|c|c|c|c|c|} \hline
      &\multicolumn{3}{c|} {$\tau=1.1$ }&
 \multicolumn{3}{c|}{$\tau=2.0$} & \multicolumn{3}{c|}{$\tau=4.0$}\\
 \cline{2-10}
    $\delta$ & $k_\delta$ & $error$ &
$ratio$ & $k_\delta$ &
$error$ & $ratio$ & $k_\d$ & $error$ & $ratio$ \\
\hline
$10^{-2}$ & $12$ & $4.67e-2$ & $0.47$ & $9$  & $2.90e-1$ & $2.90$ & $1$  & $5.65e-1$ & 5.65\\
$10^{-3}$ & $14$ & $1.47e-2$ & $0.47$ & $12$ & $3.89e-2$ & $1.23$ & $12$ & $3.89e-2$ & 1.23\\
$10^{-4}$ & $16$ & $4.30e-3$ & $0.43$ & $15$ & $5.30e-3$ & $0.53$ & $14$ & $8.70e-3$ & 0.87\\
$10^{-5}$ & $18$ & $1.30e-3$ & $0.40$ & $17$ & $1.80e-3$ & $0.56$ & $16$ & $2.80e-3$ & 0.87\\
$10^{-6}$ & $21$ & $4.45e-4$ & $0.44$ & $19$ & $6.41e-4$ & $0.64$ & $18$ & $1.00e-3$ & $1.03$\\
\hline
\end{tabular}
\end{center}
\end{table}

The numerical result is reported in Table 1.
In order to see the effect of $\tau$ in the
discrepancy principle (\ref{6.28.2}), we consider the three distinct values
$\tau=1.1$, $2$ and $4$. In order to indicate the dependence of
the convergence rates on the noise level, different values of
$\delta$ are selected. The rates in Table 1 coincide with Theorem
\ref{T1} very well. Table 1 indicates also that the absolute error
increases with respect to $\tau$. Thus, in numerical computation,
one should use smaller $\tau$ if possible.

In order to visualize the computed solutions, we plot in Figure \ref{fig1}
the results obtained for $\tau=1.1$ and various values of the noise level $\d$,
where the solid, dashed, and dash-dotted curves denote the exact solution $c^\dag$,
the initial guess $c_0$, and the computed solution $c_{n_\d}^\d$
respectively. It clearly indicates the fast convergence as $\d\rightarrow 0$ as reported in
Table 1.

\begin{figure}[htp]
\centering
  \includegraphics[scale=0.8]{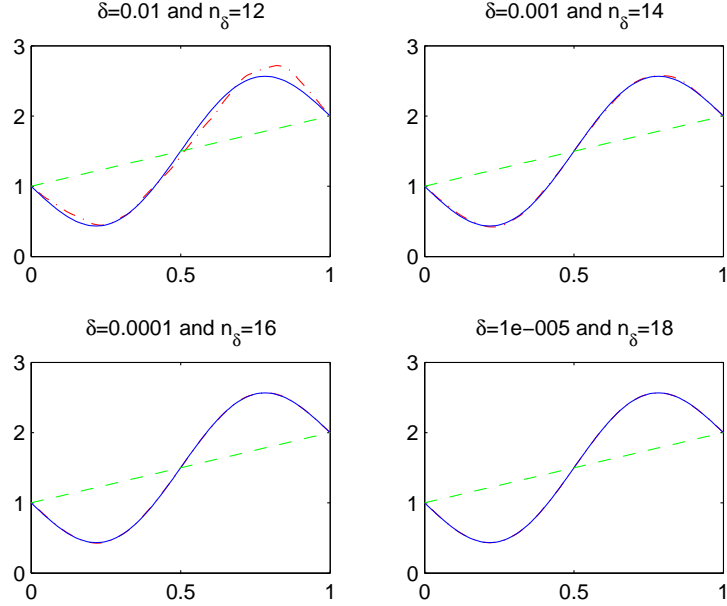}
  \caption{Comparison on the computed and exact solution for Example 1 with $\tau=1.1$ }\label{fig1}
\end{figure}

\subsection*{Example 2}\label{e2}
We repeat Example 1 but with $\tau=1.1$ and the initial guess $c_0=2-t$.
Now $c_0-c^\dag\not\in {\mathcal R} (F'(c^\dag)^*)$, and
in fact $c_0-c^\dag$ has no source-wise representation
$c_0-c^\dag\in {\mathcal R}((F'(c^\dag)^* F'(c^\dag))^\nu)$ with a
good $\nu>0$. However, Theorem \ref{T2} asserts that $\|c_{n_\d}^\d - c^\dag\|_{L^2[0,1]}\rightarrow
0$ as $\delta\rightarrow 0$. Figure \ref{fig2} clearly indicates such convergence
although the convergence speed could be quite slow which is typical for inverse problems.

\begin{figure}[htp]
\centering
  \includegraphics[scale=0.8]{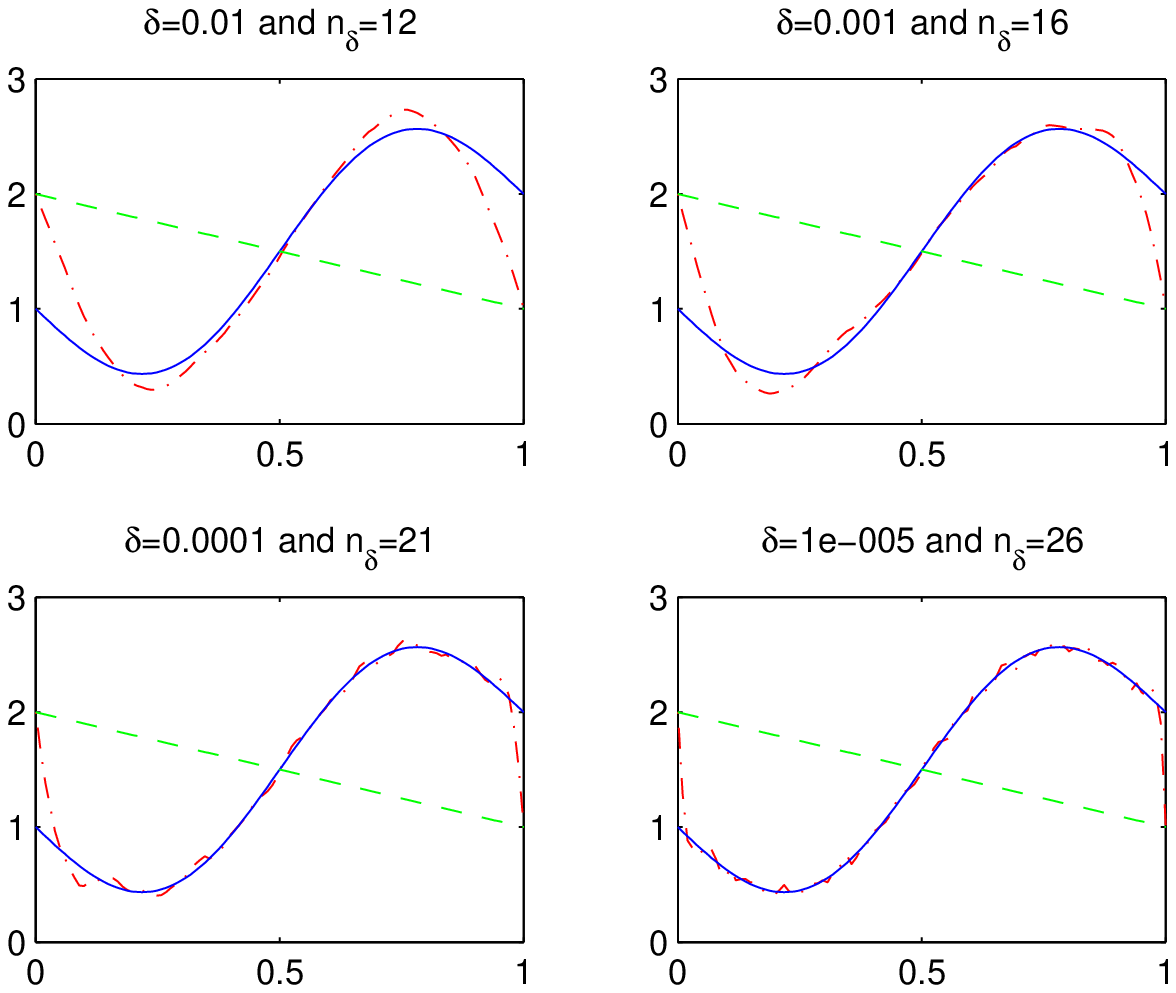}
  \caption{Comparison on the computed and exact solutions for Example 2 with $\tau=1.1$}\label{fig2}
\end{figure}



\end{document}